\newtheorem{thm}{Theorem}[section]
\newtheorem{cor}[thm]{Corollary}
\newtheorem{lem}[thm]{Lemma}
\newtheorem{example}[thm]{Example}
\newtheorem{rem}{Remark}
\journal{}
\begin{document}
\begin{spacing}{1.15}
\begin{CJK*}{GBK}{song}
\begin{frontmatter}
\title{\textbf{Unified bounds for the independence number of graphs}}

\author{Jiang Zhou}\ead{zhoujiang@hrbeu.edu.cn}

\address{College of Mathematical Sciences, Harbin Engineering University, Harbin 150001, PR China}

\begin{abstract}
The Hoffman ratio bound, Lov\'{a}sz theta function and Schrijver theta function are classical upper bounds for the independence number of graphs, which are useful in graph theory, extremal combinatorics and information theory. By using generalized inverses and eigenvalues of graph matrices, we give bounds for independence sets and the independence number of graphs. Our bounds unify the Lov\'{a}sz theta function, Schrijver theta function and Hoffman-type bounds, and we obtain the necessary and sufficient conditions of graphs attaining these bounds. Our work leads to some simple structural and spectral conditions for determining a maximum independent set, the independence number, the Shannon capacity and the Lov\'{a}sz theta function of a graph.
\end{abstract}

\begin{keyword}
Independence number, Graph matrix, Lov\'{a}sz theta function, Shannon capacity, Generalized inverse, Eigenvalue
\\
\emph{AMS classification (2020):} 05C69, 05C50, 94A15, 15A09
\end{keyword}
\end{frontmatter}

\section{Introduction}
The \textit{independence number} $\alpha(G)$ of graph $G$ is the maximum size of independent sets in $G$, which is an important graph parameter in graph theory. The following is a celebrated upper bound given by Hoffman.
\begin{thm}\label{thm1.1}
\textup{[5, Theorem 3.5.2]} Let $G$ be a $k$-regular ($k\neq0$) graph with $n$ vertices, and let $\tau$ be the minimum adjacency eigenvalue of $G$. Then
\begin{eqnarray*}
\alpha(G)\leq n\frac{|\tau|}{k-\tau}.
\end{eqnarray*}
If an independent set $C$ meets this bound, then every vertex not in $C$ is adjacent to exactly $|\tau|$ vertices of $C$.
\end{thm}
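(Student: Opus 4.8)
The plan is to argue directly with the adjacency matrix $A$ of $G$ and the characteristic $0$--$1$ vector $x$ of a given independent set $C$, writing $s=|C|$. Since $G$ is $k$-regular, the all-ones vector $\mathbf{1}$ is an eigenvector of $A$ for its largest eigenvalue $k$, while $\tau<0$ is the least eigenvalue. First I would split $x=\tfrac{s}{n}\mathbf{1}+y$ with $y\perp\mathbf{1}$; then $\|x\|^2=s$ forces $\|y\|^2=s(1-\tfrac{s}{n})$, and the cross terms in any quadratic form drop out because $A\mathbf{1}=k\mathbf{1}$ is parallel to $\mathbf{1}$ and hence orthogonal to $y$.

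The decisive identity is supplied by independence: as $C$ spans no edge, $x^{\mathsf T}Ax=0$. Expanding via the decomposition gives $0=\tfrac{s^2k}{n}+y^{\mathsf T}Ay$. I would then invoke the variational inequality $y^{\mathsf T}Ay\ge\tau\|y\|^2$, which holds because $\tau$ is the minimum eigenvalue of $A$, to deduce $-\tfrac{s^2k}{n}\ge\tau s(1-\tfrac{s}{n})$. Cancelling $s>0$ and rearranging — taking care that $\tau-k<0$, so that the inequality reverses on division — yields $s\le \tfrac{n|\tau|}{k-\tau}$. Since $C$ was arbitrary, this is the asserted bound on $\alpha(G)$.

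For the equality clause I would retrace the single inequality used. The bound is met exactly when $y^{\mathsf T}Ay=\tau\|y\|^2$, that is, when the orthogonal component $y$ lies in the $\tau$-eigenspace, so $Ay=\tau y$. Substituting back gives $Ax=\tau x+\tfrac{s}{n}(k-\tau)\mathbf{1}$; reading off a coordinate $i\notin C$, where $x_i=0$, shows that vertex $i$ has precisely $\tfrac{s}{n}(k-\tau)$ neighbours in $C$. Finally, inserting the extremal value $s=\tfrac{n|\tau|}{k-\tau}$ collapses this count to $|\tau|$, which is the claimed structural property.

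The computation itself is short; the steps demanding the most care are the sign bookkeeping (here $\tau=-|\tau|<0$, so the direction of the inequality flips when dividing by $\tau-k$) and the equality analysis, where one must verify that the \emph{only} inequality invoked is $y^{\mathsf T}Ay\ge\tau\|y\|^2$ and then correctly translate the resulting coordinate equation into the combinatorial statement about the neighbours of vertices outside $C$.
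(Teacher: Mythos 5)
Your proof is correct, but it takes a genuinely different route from the one the paper uses. The paper quotes Theorem 1.1 as a known result and re-derives it (with a strengthened equality characterization) in Corollary 6.1 as a special case of Theorem 3.1: one takes the positive semidefinite matrix $M=A-\tau I$ and the all-ones vector $e$, applies the Cauchy--Schwarz inequality in the form $|S|^2=(y^\top M^{1/2}M^{-1/2}e)^2\leq(y^\top My)(e^\top M^{\#}e)$ with $y$ the characteristic vector of the independent set, kills the off-diagonal terms of $y^\top My$ using independence, and evaluates $e^\top(A-\tau I)^{\#}e=n/(k-\tau)$ via the group inverse and the eigenvalue equation $Ae=ke$. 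You instead give the classical direct argument: decompose the characteristic vector $x=\tfrac{s}{n}\mathbf{1}+y$ orthogonally against the Perron eigenvector, use $x^\top Ax=0$ and the Rayleigh-quotient inequality $y^\top Ay\geq\tau\|y\|^2$, and read off the structural consequence of equality from $Ay=\tau y$. The two arguments rest on the same underlying fact --- positive semidefiniteness of $A-\tau I$ --- but your version exploits the regularity of $G$ (so that $\mathbf{1}$ is an eigenvector and the cross terms vanish), which makes it shorter and self-contained, whereas the paper's version is engineered to generalize: the same Cauchy--Schwarz template gives bounds for arbitrary positive semidefinite graph matrices, irregular graphs, and ultimately the Lov\'{a}sz and Schrijver theta functions. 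Your sign bookkeeping, the cancellation of $s>0$, and the translation of the coordinate equation $(Ax)_i=\tfrac{s}{n}(k-\tau)$ for $i\notin C$ into the count $|\tau|$ are all accurate; the only point worth making explicit is that $\tau<0$ (which follows from $k\neq0$ and $\operatorname{tr}A=0$), so that $|\tau|=-\tau$ and $k-\tau>0$.
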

The upper bound in Theorem \ref{thm1.1} is often called Hoffman (ratio) bound for $\alpha(G)$. Haemers generalized the Hoffman bound to the general case.
\begin{thm}\label{thm1.2}
\textup{[17, page 17]} Let $G$ be a graph with minimum degree $\delta$, and let $\lambda$ and $\tau$ be the maximum and the minimum adjacency eigenvalue of $G$, respectively. Then
\begin{eqnarray*}
\alpha(G)\leq n\frac{\lambda|\tau|}{\delta^2-\lambda\tau}.
\end{eqnarray*}
\end{thm}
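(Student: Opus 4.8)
The plan is to reach the bound through eigenvalue interlacing applied to the quotient matrix of a two-part vertex partition, which is the classical route to Haemers-type estimates. First I would fix a maximum independent set $C$ with $\alpha := \alpha(G) = |C|$ and take the partition $V(G) = C \cup (V\setminus C)$. Writing $A$ for the adjacency matrix in block form according to this partition, I would form the $2\times 2$ quotient matrix $B = (b_{ij})$, where $b_{ij}$ is the average row sum of the block of $A$ indexed by the $i$-th and $j$-th parts.

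Next I would compute the entries of $B$. Because $C$ is independent, the $C$-$C$ block vanishes, so $b_{11} = 0$. If $e$ denotes the number of edges joining $C$ and $V\setminus C$, then every edge meeting $C$ leaves $C$, whence $e = \sum_{v\in C}\deg(v) \geq \delta\alpha$; consequently $b_{12} = e/\alpha \geq \delta$ and $b_{21} = e/(n-\alpha)$. The crucial structural point is that the remaining entry $b_{22}$, the average degree inside $V\setminus C$, never has to be estimated: since $\det B = b_{11}b_{22} - b_{12}b_{21} = -b_{12}b_{21}$, the two eigenvalues $\theta_1 > 0 > \theta_2$ of $B$ satisfy $\theta_1\theta_2 = -b_{12}b_{21}$, and $b_{22}$ influences only their sum.

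Then I would invoke the interlacing theorem for quotient matrices, so that $\theta_1 \leq \lambda$ and $\theta_2 \geq \tau$. As $\theta_1 > 0$ and $\theta_2 < 0$, this gives $b_{12}b_{21} = \theta_1(-\theta_2) \leq \lambda|\tau| = -\lambda\tau$. On the other hand, $b_{12}b_{21} = e^2/\bigl(\alpha(n-\alpha)\bigr) \geq \delta^2\alpha/(n-\alpha)$. Combining the two estimates yields $\delta^2\alpha/(n-\alpha) \leq -\lambda\tau$, and solving this linear inequality gives $\alpha(\delta^2 - \lambda\tau) \leq -\lambda\tau\, n$, i.e. the asserted bound $\alpha \leq n\lambda|\tau|/(\delta^2 - \lambda\tau)$. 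As a sanity check, specializing to $k$-regular graphs sets $\delta = \lambda = k$ and recovers $n|\tau|/(k-\tau)$, matching Theorem \ref{thm1.1}.

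The main obstacle is conceptual rather than computational: one must recognize that the bound should be extracted from the product $\theta_1\theta_2$ (equivalently $\det B$) instead of from a Cauchy--Schwarz/quadratic-form argument with the characteristic vector of $C$, which only delivers the weaker bound $n\lambda|\tau|/\delta^2$. Passing through the determinant is exactly what lets the uncontrolled quantity $b_{22}$ drop out, and it is the extra $-\lambda\tau$ thereby produced in the denominator that separates Haemers' bound from this cruder estimate. A minor matter to dispose of is the degenerate edgeless case, where $\tau = 0$ and $\alpha = n$, which should be excluded so that $\theta_1\theta_2 < 0$ and the division by $n - \alpha$ are legitimate.
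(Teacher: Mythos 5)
Your argument is correct and complete: the quotient matrix of the partition $\{C, V\setminus C\}$ has $b_{11}=0$, interlacing gives $\theta_1\le\lambda$ and $\theta_2\ge\tau$, and extracting the bound from $\det B=\theta_1\theta_2=-b_{12}b_{21}$ together with $b_{12}b_{21}=e^2/(\alpha(n-\alpha))\ge\delta^2\alpha/(n-\alpha)$ yields exactly $\alpha\le n\lambda|\tau|/(\delta^2-\lambda\tau)$; the sign analysis ($\theta_1>0>\theta_2$ once $e>0$) and the exclusion of the edgeless case are handled properly. Note, however, that the paper does not prove this statement at all: Theorem \ref{thm1.2} is quoted from Haemers' thesis as background, and your interlacing argument is essentially the original proof from that source rather than something reconstructed from this paper's methods. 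The paper's own machinery (Theorem \ref{thm3.1}, a Cauchy--Schwarz argument with generalized inverses of positive semidefinite matrices $M$ and vectors $x\in R(M)$) recovers the regular Hoffman bound, the Laplacian and normalized Laplacian ratio bounds, and the Lov\'asz and Schrijver theta functions, but it is not used to rederive this particular irregular bound; indeed Example 3.4 of the paper only compares one of its bounds against Theorem \ref{thm1.2} numerically. Your closing observation is the right one to emphasize: the quadratic-form route controls only the ``row-sum'' data $b_{12}$, whereas passing through the determinant of the quotient matrix is what eliminates the uncontrolled average degree $b_{22}$ and produces the extra $-\lambda\tau$ in the denominator.
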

There are many Hoffman-type ratio bounds, which are useful in graph theory, coding theory and extremal combinatorics \cite{Delsarte,EllisFilmus,Ellis,Frankl,Friedgut}. Some generalized Hoffman ratio bounds involving adjacency eigenvalues, Laplacian eigenvalues and normalized Laplacian eigenvalues of graphs can be found in \cite{Godsil,Haemers,Harant,Lumei}.

For a graph $G=(V,E)$, let $G^k$ denote the graph whose vertex set is $V^k$, in which two vertices $u_1\cdots u_k$ and $v_1\cdots v_k$ are adjacent if and only if for each $i\in\{1,\ldots,k\}$ either $u_i=v_i$ or $\{u_i,v_i\}\in E$ (see \cite{Alon}). The \textit{Shannon capacity} \cite{Brouwer,Shannon} of $G$ is defined as
\begin{eqnarray*}
\Theta(G)=\lim_{k\rightarrow\infty}\alpha(G^k)^{1/k}=\sup_{k\rightarrow\infty}\alpha(G^k)^{1/k},
\end{eqnarray*}
and represents the number of distinct messages per use the channel can communicate with no error while used many times \cite{Alon}. It is difficult to compute $\Theta(G)$ for a general graph $G$, we do not even know the Shannon capacity of the cycle $C_7$ (see \cite{Bohman,Polak}).

The Lov\'{a}sz theta function $\vartheta(G)$ is an upper bound for $\alpha(G)$ introduced in \cite{Lovasz}, which is a powerful tool for studying the Shannon capacity of graphs. Lov\'{a}sz \cite{Lovasz} proved that $\alpha(G)\leq\Theta(G)\leq\vartheta(G)$, and posed the problem of finding graphs with $\Theta(G)=\vartheta(G)$ (see Problem 1 in \cite{Lovasz}). The Schrijver theta function $\vartheta'(G)$ is a smaller upper bound for $\alpha(G)$. Schrijver \cite{Schrijver} proved that $\alpha(G)\leq\vartheta'(G)\leq\vartheta(G)$. By considering the relation between $\alpha(G)$ and the rank of matrices associated with $G$, Haemers \cite{Haemers0} proved that $\Theta(G)$ is at most the minimum rank of a class of graph matrices.

Generalized inverses of graph matrices have important applications in random walks on graphs \cite{ChungYau} and the sparsification of graphs \cite{Spielman}. This paper uses generalized inverses methods to study $\alpha(G)$, $\Theta(G)$, $\vartheta(G)$ and $\vartheta'(G)$, and gives bounds which unify the Lov\'{a}sz theta function, Schrijver theta function and Hoffman-type bounds. Based on the generalized inverses method, we obtain simple structural and spectral conditions to determine $\alpha(G)$, $\Theta(G)$, $\vartheta(G)$ and a maximum independent set in $G$. Our conditions can also be used to find graphs with $\Theta(G)=\vartheta(G)$, which is a partial answer to the problem posed by Lov\'{a}sz.

The paper is organized as follows. In Section 2, we introduce some auxiliary lemmas. In Section 3, we give bounds for the sizes of independent sets and the independence number of graphs by using generalized inverses and eigenvalues of graph matrices. In Sections 4-6, we show that our bounds unify the Lov\'{a}sz theta function, Schrijver theta function and Hoffman-type bounds in \cite{Brouwer,Godsil,Harant,Lumei}, and obtain the necessary and sufficient conditions of graphs attaining the bounds. In Section 7, we give some simple structural and spectral conditions for determining a maximum independent set, the independence number, the Shannon capacity and the Lov\'{a}sz theta function of a graph. Some concluding remarks are given in Section 8.

\section{Preliminaries}
For a real square matrix $M$, the \textit{group inverse} of $M$, denoted by $M^\#$, is the matrix $X$ such that $MXM=M,~XMX=X$ and $MX=XM$. It is known \textup{[2, page 156]} that $M^\#$ exists if and only if $\mbox{\rm rank}(M)=\mbox{\rm rank}(M^2)$. If $M^\#$ exists, then $M^\#$ is unique.

For a positive semidefinite real matrix $M$, there exists an orthogonal matrix $U$ such that
\begin{eqnarray*}
M=U{\rm diag}(\lambda_1,\ldots,\lambda_n)U^\top,
\end{eqnarray*}
where ${\rm diag}(\lambda_1,\ldots,\lambda_n)$ denotes a diagonal matrix with nonnegative diagonal entries $\lambda_1,\ldots,\lambda_n$. We set $M^{\frac{1}{2}}=U{\rm diag}(\lambda_1^{\frac{1}{2}},\ldots,\lambda_n^{\frac{1}{2}})U^\top$ and $M^{-\frac{1}{2}}=U{\rm diag}((\lambda_1^+)^{\frac{1}{2}},\ldots,(\lambda_n^+)^{\frac{1}{2}})U^\top$, where $\lambda_i^+=\lambda_i^{-1}$ if $\lambda_i>0$, and $\lambda_i^+=0$ if $\lambda_i=0$. Then
\begin{eqnarray*}
M^\#=U{\rm diag}(\lambda_1^+,\ldots,\lambda_n^+)U^\top=(M^{-\frac{1}{2}})^2.
\end{eqnarray*}

\begin{lem}\label{lem2.1}
\textup{[2, pages 162-163]} Let $M$ be a square matrix such that $M^\#$ exists, and let $\lambda\neq0$ be an eigenvalue of $M$ with an eigenvector $x$. Then
\begin{eqnarray*}
M^\#x=\lambda^{-1}x.
\end{eqnarray*}
\end{lem}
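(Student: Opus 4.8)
The plan is to argue directly from the three defining identities of the group inverse, writing $X=M^\#$ for brevity, so that $MXM=M$, $XMX=X$ and $MX=XM$. Since $M$ need not be invertible, the whole point is that we cannot simply cancel a factor of $M$; the nonzero eigenvalue $\lambda$ is what lets us perform the cancellation, through ordinary scalar division.

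First I would apply the identity $MXM=M$ to the eigenvector $x$. Using $Mx=\lambda x$, the left-hand side becomes $MX(Mx)=MX(\lambda x)=\lambda\,MXx$, while the right-hand side is $Mx=\lambda x$. Equating the two and dividing by $\lambda\neq 0$ yields
\begin{eqnarray*}
MXx = x,\qquad\text{that is,}\qquad MM^\# x = x.
\end{eqnarray*}
This is the decisive step: it replaces the singular operator $M$ by the idempotent-on-$x$ relation $MM^\#x=x$, and it is the only place where the defining equation $MXM=M$ is used.

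Next I would invoke the commutativity condition $MX=XM$ to rewrite the identity just obtained as $M^\#Mx=x$. Substituting $Mx=\lambda x$ gives $M^\#(\lambda x)=\lambda\,M^\#x=x$, and dividing once more by $\lambda\neq 0$ produces
\begin{eqnarray*}
M^\#x=\lambda^{-1}x,
\end{eqnarray*}
which is the assertion of the lemma.

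There is no serious obstacle here, since the argument is a two-line manipulation of the axioms; the only thing to be careful about is the order in which the defining properties are used. In particular, one must not attempt to start from $XMX=X$ (which would force one to understand the action of $X$ on $x$ before it is known) but rather from $MXM=M$, so that the known eigenvalue relation $Mx=\lambda x$ can be exploited immediately and the hypothesis $\lambda\neq 0$ can be used to cancel the scalar factor at each stage.
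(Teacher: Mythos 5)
Your argument is correct. The paper does not prove this lemma at all --- it simply cites [2, pages 162--163] --- so there is nothing to compare against; your two-step derivation ($MXMx=Mx$ gives $MXx=x$ after cancelling $\lambda\neq0$, then commutativity and a second cancellation give $M^\#x=\lambda^{-1}x$) is a clean and complete proof from the defining identities, and it even shows the conclusion needs only $MXM=M$ and $MX=XM$, not the third axiom.
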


For a real matrix $A$, the \textit{Moore-Penrose inverse} of $A$ is the real matrix $X$ such that $AXA=A$, $XAX=X$, $(AX)^\top=AX$ and $(XA)^\top=XA$. Let $A^+$ denote the Moore-Penrose inverse of $A$. It is well known that $A^+$ exists and is unique.

Let $R(M)=\{x:x=My,y\in\mathbb{R}^n\}$ denote the range of an $m\times n$ real matrix $M$.
\begin{lem}\label{lem2.2}
\textup{[2, pages 43,49]} Let $A$ be a real matrix. Then
\begin{eqnarray*}
R(AA^\top)&=&R(A)=R(AA^+),\\
(A^\top A)^\#A^\top&=&A^+.
\end{eqnarray*}
\end{lem}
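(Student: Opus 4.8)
The statement has two parts, and the plan is to reduce both to a single kernel identity. First I would prove that $\ker(AA^\top)=\ker(A^\top)$: the inclusion $\ker(A^\top)\subseteq\ker(AA^\top)$ is immediate, and conversely if $AA^\top y=0$ then $y^\top AA^\top y=\|A^\top y\|^2=0$, forcing $A^\top y=0$. Since $AA^\top$ is symmetric, combining this with the general orthogonality relation $R(M)=\ker(M^\top)^\perp$ gives $R(AA^\top)=\ker(AA^\top)^\perp=\ker(A^\top)^\perp=R(A)$, which is the first equality.

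For the equality $R(A)=R(AA^+)$ I would argue directly from the Moore-Penrose relation $AA^+A=A$. The inclusion $R(AA^+)\subseteq R(A)$ is trivial, and writing each column of $A$ as $AA^+$ applied to that same column (via $AA^+A=A$) shows $R(A)\subseteq R(AA^+)$; hence equality.

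For the second formula I set $M=A^\top A$, which is positive semidefinite, so by the construction recalled in the preliminaries $M^\#$ exists, is symmetric, and $M^\#M$ is the orthogonal projection onto $R(M)$. Applying the first part to $A^\top$ gives $R(M)=R(A^\top A)=R(A^\top)$, hence $\ker(M)=R(M)^\perp=R(A^\top)^\perp=\ker(A)$. I would then verify that $X=M^\#A^\top$ satisfies the four defining equations of the Moore-Penrose inverse and invoke uniqueness. Three of them are routine: $XAX=M^\#MM^\#A^\top=M^\#A^\top=X$ using $M^\#MM^\#=M^\#$; the matrix $AX=AM^\#A^\top$ is symmetric because $M^\#$ is; and $XA=M^\#M$ is symmetric because $M^\#$ and $M$ are symmetric and commute.

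The main obstacle is the remaining condition $AXA=A$, that is $AM^\#M=A$. Here I would use that $I-M^\#M$ is the orthogonal projection onto $\ker(M)=\ker(A)$, so $(I-M^\#M)z\in\ker(A)$ for every $z$, whence $A(I-M^\#M)=0$ and $AM^\#M=A$. This step is exactly where the kernel identity $\ker(A^\top A)=\ker(A)$ (the $A^\top$-analogue of the identity from the first paragraph) is essential, so establishing that identity cleanly is the crux of the whole lemma.
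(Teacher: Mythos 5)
Your proof is correct and complete. The paper does not prove this lemma at all---it is quoted from Ben-Israel and Greville's book [2]---so there is no in-paper argument to compare against; your reduction of everything to the kernel identity $\ker(B^\top B)=\ker(B)$, followed by verification of the four Penrose equations for $X=(A^\top A)^{\#}A^\top$ and an appeal to uniqueness of the Moore--Penrose inverse, is a clean and standard self-contained derivation of the cited facts.
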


\begin{lem}\label{lem2.4}
Let $A$ be an $n\times m$ real matrix, and let $x$ be a unit real vector of dimension $n$. Then
\begin{eqnarray*}
x^\top AA^+x\leq1,
\end{eqnarray*}
with equality if and only if $x\in R(A)$.
\end{lem}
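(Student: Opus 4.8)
The plan is to recognize $P := AA^+$ as the orthogonal projection onto $R(A)$ and then apply the Pythagorean theorem. First I would verify that $P$ is symmetric and idempotent. Symmetry is immediate from the Moore-Penrose defining relation $(AA^+)^\top = AA^+$. For idempotency I compute $P^2 = (AA^+)(AA^+) = (AA^+A)A^+ = AA^+ = P$, using $AA^+A = A$. Hence $P$ is an orthogonal projection matrix, and by Lemma \ref{lem2.2} its range is $R(P) = R(AA^+) = R(A)$.

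Next I would rewrite the quadratic form. Using $P^\top = P$ and $P^2 = P$, I have
\begin{eqnarray*}
x^\top AA^+ x = x^\top P x = x^\top P^\top P x = (Px)^\top(Px) = \|Px\|^2.
\end{eqnarray*}
Then I decompose $x = Px + (I-P)x$. Because $P$ is a symmetric idempotent, the two summands are orthogonal: $(Px)^\top(I-P)x = x^\top P^\top(I-P)x = x^\top(P - P^2)x = 0$. The Pythagorean identity therefore gives
\begin{eqnarray*}
1 = \|x\|^2 = \|Px\|^2 + \|(I-P)x\|^2 \geq \|Px\|^2 = x^\top AA^+ x,
\end{eqnarray*}
which is the claimed inequality, where I have used that $x$ is a unit vector.

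For the equality characterization, the displayed chain shows that equality holds if and only if $\|(I-P)x\|^2 = 0$, i.e. $(I-P)x = 0$, i.e. $x = Px$. Since $Px \in R(P) = R(A)$, this forces $x \in R(A)$; conversely, if $x \in R(A) = R(P)$, then $x = Py$ for some $y$ and $Px = P^2 y = Py = x$, so $P$ fixes $x$ and equality is restored. This closes the biconditional.

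The computations here are all routine; the only genuine content is the observation that the Moore-Penrose axioms force $AA^+$ to be a symmetric idempotent, hence an orthogonal projection, together with the identification $R(AA^+) = R(A)$ supplied by Lemma \ref{lem2.2}. Once that structural fact is in hand, both the inequality and its equality case follow immediately from the orthogonal decomposition, so I do not anticipate any real obstacle beyond stating these steps cleanly.
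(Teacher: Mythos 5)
Your proof is correct and rests on the same key facts as the paper's: $AA^+$ is a symmetric idempotent (hence an orthogonal projection) and $R(AA^+)=R(A)$ by Lemma \ref{lem2.2}. The only cosmetic difference is that you finish with the Pythagorean decomposition $\|x\|^2=\|Px\|^2+\|(I-P)x\|^2$, whereas the paper notes that the eigenvalues of a symmetric idempotent are $0$ or $1$; these are interchangeable ways of bounding the quadratic form of a projection, so the argument is essentially the same.
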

\begin{proof}
It is known \cite{Ben-Israel} that $AA^+$ is a real symmetric idempotent matrix. So each eigenvalue of $AA^+$ is $0$ or $1$. For a unit real vector $x$, we have $x^\top AA^+x\leq1$, with equality if and only if $x\in R(AA^+)$. By Lemma \ref{lem2.2}, $x\in R(AA^+)$ is equivalent to $x\in R(A)$.
\end{proof}
We now introduce the Lov\'{a}sz theta function defined in \cite{Lovasz}. An \textit{orthonormal representation} of an $n$-vertex graph $G$ is a set $\{u_1,\ldots,u_n\}$ of unit real vectors such that $u_i^\top u_j=0$ if $i$ and $j$ are two nonadjacent vertices in $G$. The \textit{value} of an orthonormal representation $\{u_1,\ldots,u_n\}$ is defined to be
\begin{eqnarray*}
\min_c\max_{1\leq i\leq n}\frac{1}{(c^\top u_i)^2},
\end{eqnarray*}
where $c$ ranges over all unit real vectors. The \textit{Lov\'{a}sz theta function} $\vartheta(G)$ is the minimum value of all orthonormal representations of $G$.

Let $(M)_{ij}$ denote the $(i,j)$-entry of a matrix $M$, and let $\lambda_1(A)$ denote the largest eigenvalue of a real symmetric matrix $A$. The independence number $\alpha(G)$, the Shannon capacity $\Theta(G)$ and the Lov\'{a}sz theta function $\vartheta(G)$ have the following relations.
\begin{lem}\label{lem2.5}
\textup{\cite{Lovasz}} For any graph $G$, we have
\begin{eqnarray*}
\alpha(G)\leq\Theta(G)\leq\vartheta(G)=\min_{A}\lambda_1(A),
\end{eqnarray*}
where $A$ ranges over all real symmetric matrices indexed by vertices of $G$ such that $(A)_{ij}=1$ when $i=j$ or $i,j$ are two nonadjacent vertices.
\end{lem}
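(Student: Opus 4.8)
The plan is to split the chain into the elementary order inequalities and the substantive spectral identity. First, $\alpha(G)\le\Theta(G)$ is immediate: setting $k=1$ in $\Theta(G)=\sup_k\alpha(G^k)^{1/k}$ gives $\alpha(G)=\alpha(G^1)\le\Theta(G)$. To obtain $\Theta(G)\le\vartheta(G)$ I would isolate two properties of $\vartheta$: (i) $\alpha(H)\le\vartheta(H)$ for every graph $H$, and (ii) submultiplicativity $\vartheta(G^k)\le\vartheta(G)^k$. Together these give $\alpha(G^k)\le\vartheta(G^k)\le\vartheta(G)^k$, so $\alpha(G^k)^{1/k}\le\vartheta(G)$ for all $k$ and hence $\Theta(G)\le\vartheta(G)$ after taking the supremum.

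For (i), fix an orthonormal representation $\{u_1,\ldots,u_n\}$ realizing $\vartheta(H)$ together with its optimal unit vector $c$, so that $(c^\top u_i)^2\ge 1/\vartheta(H)$ for every $i$. If $S$ is independent then $\{u_i:i\in S\}$ is an orthonormal system, and Bessel's inequality gives $\sum_{i\in S}(c^\top u_i)^2\le\|c\|^2=1$; hence $|S|\le\vartheta(H)$ and $\alpha(H)\le\vartheta(H)$. For (ii), starting from an optimal representation $\{u_i\}$ of $G$ with unit vector $c$, I would form the Kronecker vectors $u_{i_1}\otimes\cdots\otimes u_{i_k}$ indexed by the vertices of $G^k$, with $c\otimes\cdots\otimes c$ as the new unit vector. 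Since nonadjacent distinct vertices of $G^k$ differ and are nonadjacent in some coordinate $l$, the factor $u_{i_l}^\top u_{j_l}=0$ forces $(u_{i_1}\otimes\cdots)^\top(u_{j_1}\otimes\cdots)=\prod_l u_{i_l}^\top u_{j_l}=0$; this is an orthonormal representation of $G^k$ whose value is at most $\big(\max_i 1/(c^\top u_i)^2\big)^k=\vartheta(G)^k$, giving $\vartheta(G^k)\le\vartheta(G)^k$.

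The heart of the lemma is $\vartheta(G)=\min_A\lambda_1(A)$, proved by two complementary constructions. For $\min_A\lambda_1(A)\le\vartheta(G)$, I take an optimal representation $\{u_i\}$ with unit vector $c$ and, flipping signs so that $c^\top u_i>0$, define the symmetric matrix $A$ by $a_{ii}=1$ and $a_{ij}=1-\frac{u_i^\top u_j}{(c^\top u_i)(c^\top u_j)}$ for $i\ne j$; as $u_i^\top u_j=0$ on non-edges, $A$ has the prescribed $1$-entries. Putting $z_i=u_i/(c^\top u_i)$ and $v=\sum_i x_iz_i$, a direct expansion yields
\[
x^\top Ax=\Big(\sum_i x_i\Big)^2-\|v\|^2+\sum_i\frac{x_i^2}{(c^\top u_i)^2}.
\]
Since $c^\top z_i=1$ gives $\sum_i x_i=c^\top v$ and $\|c\|=1$, the first two terms are $\le 0$, so $x^\top Ax\le\sum_i x_i^2/(c^\top u_i)^2\le\vartheta(G)\|x\|^2$ and therefore $\lambda_1(A)\le\vartheta(G)$.

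The reverse inequality $\vartheta(G)\le\min_A\lambda_1(A)$ is the step I expect to be the main obstacle, since it must build an orthonormal representation out of an arbitrary extremal matrix. Given an admissible $A$, set $\lambda=\lambda_1(A)$; then $\lambda I-A\succeq 0$, while $\mathrm{tr}(A)=n$ forces $\lambda\ge 1$, so I may write $\lambda I-A=W^\top W$ with columns $w_i$ obeying $w_i^\top w_i=\lambda-1$ and $w_i^\top w_j=-a_{ij}=-1$ on non-edges. Choosing a unit vector $c$ orthogonal to all $w_i$ and setting $u_i=\frac{1}{\sqrt{\lambda}}(c+w_i)$ gives $\|u_i\|=1$, $c^\top u_i=1/\sqrt{\lambda}$, and $u_i^\top u_j=\frac{1}{\lambda}(1+w_i^\top w_j)=0$ on non-edges; thus $\{u_i\}$ is an orthonormal representation of $G$ with unit vector $c$ and value $\lambda$, whence $\vartheta(G)\le\lambda=\lambda_1(A)$, and minimizing over $A$ closes the identity. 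The points needing care are the sign normalization $c^\top u_i>0$ used above and the degenerate case $\lambda=1$ (where $A=I$ and $G$ is complete, so every $w_i=0$ but the representation still has value $1$).
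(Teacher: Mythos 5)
The paper gives no proof of Lemma~2.5: it is imported verbatim from Lov\'asz's 1979 paper with a citation, so there is nothing internal to compare against line by line. Your proposal is a correct reconstruction of Lov\'asz's original arguments, and it is in fact the same machinery the paper reuses later: the matrix $a_{ij}=1-\frac{u_i^\top u_j}{(c^\top u_i)(c^\top u_j)}$ and the vectors $u_i=\lambda^{-1/2}(c+w_i)$ appear almost verbatim in the proofs of Theorems~4.1 and~5.1, where the author explicitly says the method follows \textup{[19, Theorem 3]}. All the main steps check out: Bessel's inequality for $\alpha\le\vartheta$, the tensor construction for $\vartheta(G^k)\le\vartheta(G)^k$ (your observation that nonadjacency in $G^k$ forces orthogonality in some coordinate is exactly right), the identity $x^\top Ax=(\sum_i x_i)^2-\|v\|^2+\sum_i x_i^2/(c^\top u_i)^2$ with Cauchy--Schwarz giving $(\sum_i x_i)^2=(c^\top v)^2\le\|v\|^2$, and the factorization $\lambda I-A=W^\top W$ for the converse. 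Two small points deserve a sentence each if you write this up: (i) you need the minimum in the definition of $\vartheta$ to be attained and $c^\top u_i\ne 0$ for the optimizer, both standard by compactness and finiteness of the value; (ii) a unit vector $c$ orthogonal to all columns $w_i$ exists only if you arrange for $W$ to have more rows than its rank (e.g.\ append a zero row before factoring) --- the paper handles exactly this in the proof of Theorem~5.1 by requiring ``the rank of $T$ is smaller than the number of rows of $T$,'' and you should do the same.
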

For a graph $G$, its \textit{Schrijver theta function} $\vartheta'(G)$ is defined as \cite{Schrijver}
\begin{eqnarray*}
\vartheta'(G)=\min_{A}\lambda_1(A),
\end{eqnarray*}
where $A$ ranges over all real symmetric matrices indexed by vertices of $G$ such that $(A)_{ij}\geq1$ when $i=j$ or $i,j$ are two nonadjacent vertices.
\begin{lem}\label{lem2.7}
\textup{\cite{Schrijver}} For any graph $G$, we have
\begin{eqnarray*}
\alpha(G)\leq\vartheta'(G).
\end{eqnarray*}
\end{lem}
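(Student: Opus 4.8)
The statement to prove is $\alpha(G)\leq\vartheta'(G)$, where by definition $\vartheta'(G)=\min_A\lambda_1(A)$ over all real symmetric $A$ indexed by $V(G)$ with $(A)_{ij}\geq1$ whenever $i=j$ or $i,j$ are nonadjacent. So it suffices to fix an arbitrary feasible matrix $A$ and show $\lambda_1(A)\geq\alpha(G)$; taking the minimum over $A$ then yields the claim. The plan is to exhibit a large Rayleigh quotient supported on a maximum independent set.

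First I would let $C\subseteq V(G)$ be a maximum independent set, so $|C|=\alpha(G)$, and consider the principal submatrix $A_C$ of $A$ indexed by the vertices of $C$. Since $C$ is independent, any two distinct vertices in $C$ are nonadjacent, and together with the diagonal condition this forces $(A)_{ij}\geq1$ for all $i,j\in C$. Hence $A_C$ is a symmetric matrix of order $\alpha(G)$ all of whose entries are at least $1$.

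Next I would bound $\lambda_1(A_C)$ from below using the Rayleigh quotient characterization with the all-ones test vector $\mathbf{1}$ of dimension $|C|$. Writing out
\begin{eqnarray*}
\lambda_1(A_C)\geq\frac{\mathbf{1}^\top A_C\,\mathbf{1}}{\mathbf{1}^\top\mathbf{1}}=\frac{\sum_{i,j\in C}(A_C)_{ij}}{|C|}\geq\frac{|C|^2}{|C|}=|C|=\alpha(G),
\end{eqnarray*}
where the second inequality uses that there are $|C|^2$ entries, each at least $1$. Finally I would invoke the Cauchy interlacing theorem, which gives $\lambda_1(A)\geq\lambda_1(A_C)$ because $A_C$ is a principal submatrix of the symmetric matrix $A$. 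Chaining these gives $\lambda_1(A)\geq\alpha(G)$ for every feasible $A$, and minimizing over $A$ completes the proof.

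The argument is short and largely routine; the only substantive ingredients are the Cauchy interlacing theorem (to pass from $A_C$ back to $A$) and the elementary Rayleigh quotient estimate. The main point to get right is the feasibility observation that independence of $C$ is exactly what guarantees every entry of $A_C$ is at least $1$ — this is where the ``$\geq1$'' in the definition of $\vartheta'(G)$ is used, and it is the only place the combinatorial structure of $G$ enters. No genuine obstacle is expected here.
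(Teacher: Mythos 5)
Your proof is correct. The paper itself gives no proof of this lemma --- it is quoted directly from Schrijver's paper --- but your argument (the principal submatrix $A_C$ on an independent set $C$ has all entries at least $1$ by feasibility, the Rayleigh quotient with the all-ones vector gives $\lambda_1(A_C)\geq|C|$, and interlacing lifts this to $\lambda_1(A)\geq|C|$) is exactly the standard argument and matches Schrijver's original proof. As a minor simplification, you could avoid invoking interlacing by extending the all-ones vector on $C$ by zeros to a vector $y\in\mathbb{R}^{V(G)}$ and computing $\lambda_1(A)\geq y^\top Ay/y^\top y=\mathbf{1}^\top A_C\mathbf{1}/|C|\geq|C|$ directly, but this is a matter of taste.
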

For an $n$-vertex graph $G$, the \textit{adjacency matrix} $A$ of $G$ is an $n\times n$ symmetric matrix with entries
\begin{eqnarray*}
(A)_{ij}=\begin{cases}1~~~~~~~~~~~~~~~~~~\mbox{if}~\{i,j\}\in E(G),\\
0~~~~~~~~~~~~~~~~~~\mbox{if}~\{i,j\}\notin E(G),\end{cases}
\end{eqnarray*}
where $E(G)$ denotes the edge set of $G$. Eigenvalues of the adjacency matrix of $G$ are called \textit{adjacency eigenvalues} of $G$. The largest adjacency eigenvalue of a graph has the following upper bound.

The following lemma will be used later in the proof of Example 4.3.
\begin{lem}\label{lem2.8}
\textup{\cite{Berman}} Let $G$ be a graph with adjacency matrix $A$. Then
\begin{eqnarray*}
\lambda_1(A)\leq\max_{\{u,v\}\in E(G)}\sqrt{d_ud_v},
\end{eqnarray*}
where $d_u$ denotes the degree of a vertex $u$.
\end{lem}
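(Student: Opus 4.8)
The plan is to work with a Perron eigenvector of $A$ together with a well-chosen rescaling. Since $A$ is a nonnegative symmetric matrix, its largest eigenvalue $\lambda_1(A)$ coincides with its spectral radius, so there is an eigenvector $x\geq0$, $x\neq0$, with $Ax=\lambda_1(A)x$. I may assume $G$ has at least one edge (otherwise $A=0$ and the bound is vacuous), so $\lambda_1(A)>0$; the eigenvalue equation $\lambda_1(A)x_u=\sum_{v\sim u}x_v$ then forces $x_u=0$ at every isolated vertex, and I only need to consider vertices of positive degree. Write $M=\max_{\{u,v\}\in E(G)}d_ud_v$.

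The key idea is to pass to the rescaled quantities $y_u=x_u/\sqrt{d_u}$, defined on the non-isolated vertices, and to evaluate the eigenvalue equation at a vertex $p$ where $y_p$ is maximal among all vertices with $x_p>0$ (such a $p$ exists because $x\geq0$, $x\neq0$). At this vertex I would estimate
\begin{eqnarray*}
\lambda_1(A)\,x_p=\sum_{v\sim p}x_v=\sum_{v\sim p}\sqrt{d_v}\,y_v\leq y_p\sum_{v\sim p}\sqrt{d_v},
\end{eqnarray*}
using $y_v\leq y_p$ for the neighbours $v$ of $p$ (each such $v$ has positive degree, so $y_v$ is indeed defined).

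It then remains to bound $\sum_{v\sim p}\sqrt{d_v}$. For every neighbour $v$ of $p$ the pair $\{p,v\}$ is an edge, so $d_pd_v\leq M$ and hence $\sqrt{d_v}\leq\sqrt{M}/\sqrt{d_p}$. Summing over the $d_p$ neighbours of $p$ gives $\sum_{v\sim p}\sqrt{d_v}\leq d_p\cdot\sqrt{M}/\sqrt{d_p}=\sqrt{d_p}\,\sqrt{M}$. Substituting this into the previous display and recalling $y_p=x_p/\sqrt{d_p}$, I get $\lambda_1(A)\,x_p\leq(x_p/\sqrt{d_p})\sqrt{d_p}\,\sqrt{M}=x_p\sqrt{M}$. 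Dividing by $x_p>0$ yields $\lambda_1(A)\leq\sqrt{M}=\max_{\{u,v\}\in E(G)}\sqrt{d_ud_v}$, as claimed.

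The genuinely clever choice is the scaling by $\sqrt{d_u}$; once it is in place, the argument reduces to the one-line per-edge inequality $d_pd_v\leq M$. The point that needs care — and where the degenerate cases intrude — is guaranteeing that the maximizer $p$ satisfies $x_p>0$ and that $p$ and all its neighbours have positive degree, so that the rescaling and the final division are legitimate; this is exactly what the reduction to non-isolated vertices in the first step secures. I note that the same conclusion can be packaged through the Collatz--Wielandt bound $\rho(A)\leq\max_u(Az)_u/z_u$ applied to the positive vector $z_u=\sqrt{d_u}$, since the computation above shows $(Az)_u=\sum_{v\sim u}\sqrt{d_v}\leq\sqrt{M}\,z_u$ for every non-isolated $u$.
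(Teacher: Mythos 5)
Your proof is correct. Note that the paper itself gives no argument for this lemma --- it is quoted verbatim from the cited reference of Berman and Zhang --- so there is no in-paper proof to compare against; what you have written is a complete, self-contained derivation of the cited bound. The argument is the standard one: conjugating by $D^{1/2}$ (equivalently, your substitution $y_u=x_u/\sqrt{d_u}$) and applying a Collatz--Wielandt-type estimate at a maximizer of $y$. The degenerate cases are handled properly: isolated vertices are forced out of the support of the Perron vector once $\lambda_1(A)>0$, so the maximizer $p$ has $d_p>0$, and every neighbour $v$ of $p$ automatically has $d_v>0$. The one step worth a second look is the inequality $y_v\leq y_p$ for neighbours $v$ of $p$, since you chose $p$ to maximize $y$ only over vertices with positive $x$-coordinate; but a neighbour with $x_v=0$ has $y_v=0<y_p$, and one with $x_v>0$ lies in the maximizing set, so the inequality holds in both cases and the proof goes through.
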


\section{Bounds for independent sets and independence number}
Let $A[S]$ denote the principal submatrix of a square matrix $A$ determined by the rows and columns whose index set is $S$. A real vector $x=(x_1,\ldots,x_n)^\top$ is called \textit{total nonzero} if $x_i\neq0$ for $i=1,\ldots,n$.

For an $n$-vertex graph $G$, let $V(G)$ denote the vertex set of $G$, and let $\mathcal{P}(G)$ denote the set of real matrix-vector pairs $(M,x)$ such that\\
(a) $M$ is a positive semidefinite $n\times n$ matrix indexed by vertices of $G$.\\
(b) $x=(x_1,\ldots,x_n)^\top\in R(M)$ is total nonzero and $(M)_{ij}x_ix_j\leq0$ if $i,j$ are two nonadjacent distinct vertices.

For $(M,x)\in\mathcal{P}(G)$ and vertex subset $T\subseteq V(G)$, let $F(M,x)$ and $F_T(M,x)$ denote the following functions
\begin{eqnarray*}
F(M,x)&=&x^\top M^\#x\max_{u\in V(G)}\frac{(M)_{uu}}{x_u^2},\\
F_T(M,x)&=&\frac{x^\top M^\#x}{|T|}\sum_{u\in T}\frac{(M)_{uu}}{x_u^2}.
\end{eqnarray*}

In \cite{Godsil}, Godsil and Newman gave bounds on the size of an independent set by using positive semidefinite graph matrices. Motivated by the ideas of Godsil and Newman, we give the following bounds in terms of generalized inverses of positive semidefinite graph matrices.
\begin{thm}\label{thm3.1}
Let $(M,x)\in\mathcal{P}(G)$ be a matrix-vector pair associated with an $n$-vertex graph $G$. Then the following statements hold:\\
(1) For any independent set $S$ of $G$, we have
\begin{eqnarray*}
|S|\leq F_S(M,x),
\end{eqnarray*}
with equality if and only if $M[S]$ is diagonal and there exists a constant $c$ such that
\begin{eqnarray*}
\frac{(M)_{vv}}{x_v^2}&=&c~(v\in S),\\
cx_v&=&\sum_{u\in S}(M)_{vu}x_u^{-1}~(v\notin S).
\end{eqnarray*}
(2) The independence number $\alpha(G)$ satisfies
\begin{eqnarray*}
\alpha(G)\leq F(M,x),
\end{eqnarray*}
with equality if and only if $G$ has an independent set $C$ such that $M[C]$ is diagonal and
\begin{eqnarray*}
\frac{(M)_{vv}}{x_v^2}&=&\max_{u\in V(G)}\frac{(M)_{uu}}{x_u^2}=c~(v\in C),\\
cx_v&=&\sum_{u\in C}(M)_{vu}x_u^{-1}~(v\notin C).
\end{eqnarray*}
Moreover, if an independent set $C$ satisfies the above conditions, then
\begin{eqnarray*}
\alpha(G)=|C|=F(M,x).
\end{eqnarray*}
(3) Suppose that $\frac{(M)_{11}}{x_1^2}\geq\cdots\geq\frac{(M)_{nn}}{x_n^2}$ are arranged in a decreasing order. If $\alpha(G)\geq t$ for some positive integer $t$, then
\begin{eqnarray*}
\alpha(G)\leq F_T(M,x),
\end{eqnarray*}
where $T=\{1,\ldots,t\}$.
\end{thm}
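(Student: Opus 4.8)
The engine of the whole argument is a Cauchy--Schwarz inequality adapted to the group inverse. The plan is to first establish that for every $z\in\mathbb{R}^n$,
$$(z^\top x)^2\leq (z^\top Mz)(x^\top M^\#x),$$
with equality if and only if $Mz=cx$ for some scalar $c$. To see this I would use the spectral factorizations of Section 2: writing $P=M^{\frac12}M^{-\frac12}$, the diagonalizations show $P=MM^\#$ is the orthogonal projection onto $R(M)$, so $Px=x$ because $x\in R(M)$. Then $z^\top x=z^\top Px=(M^{\frac12}z)^\top(M^{-\frac12}x)$, and ordinary Cauchy--Schwarz applied to $M^{\frac12}z$ and $M^{-\frac12}x$ gives the claim, since $\|M^{\frac12}z\|^2=z^\top Mz$ and $\|M^{-\frac12}x\|^2=x^\top M^\#x$. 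Equality forces $M^{\frac12}z=cM^{-\frac12}x$; applying $M^{\frac12}$ and using $M^{\frac12}M^{-\frac12}x=Px=x$ turns this into $Mz=cx$. I would also record that $x^\top M^\#x=\|M^{-\frac12}x\|^2>0$: if it vanished then $M^\#x=0$, whence $x=MM^\#x=Px=0$, contradicting that $x$ is total nonzero.

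For part (1), given an independent set $S$ I would test the engine against the vector $z$ with $z_u=x_u^{-1}$ for $u\in S$ and $z_u=0$ otherwise (legitimate since $x$ is total nonzero). Then $z^\top x=\sum_{u\in S}x_u^{-1}x_u=|S|$, while
$$z^\top Mz=\sum_{u\in S}\frac{(M)_{uu}}{x_u^2}+\sum_{\substack{u,v\in S\\ u\neq v}}\frac{(M)_{uv}}{x_ux_v}\leq\sum_{u\in S}\frac{(M)_{uu}}{x_u^2},$$
because any two distinct $u,v\in S$ are nonadjacent, so condition (b) gives $(M)_{uv}x_ux_v\leq0$ and hence $(M)_{uv}/(x_ux_v)\leq0$. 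Substituting into the engine and dividing by $|S|$ yields $|S|\leq F_S(M,x)$. The main work is the equality analysis: equality needs both the off-diagonal sum to vanish (forcing $(M)_{uv}=0$ for distinct $u,v\in S$, i.e.\ $M[S]$ diagonal) and Cauchy--Schwarz equality $Mz=cx$. Reading $Mz=cx$ off coordinatewise, with $M[S]$ diagonal, gives $(M)_{vv}x_v^{-1}=cx_v$ for $v\in S$ (the condition $c=(M)_{vv}/x_v^2$) and $\sum_{u\in S}(M)_{vu}x_u^{-1}=cx_v$ for $v\notin S$; the converse runs these identities backwards to recover $Mz=cx$ together with a vanishing off-diagonal sum.

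Part (2) follows by applying (1) to a maximum independent set $C$ and bounding the average by the maximum: $\alpha(G)=|C|\leq F_C(M,x)\leq F(M,x)$, since $\sum_{u\in C}(M)_{uu}/x_u^2\leq|C|\max_u(M)_{uu}/x_u^2$. The equality characterization is then bookkeeping: $\alpha(G)=F(M,x)$ squeezes $|C|=F_C(M,x)=F(M,x)$, so the conditions of (1) hold and in addition the common value $c$ must equal $\max_u(M)_{uu}/x_u^2$ (using $x^\top M^\#x>0$ to cancel it); conversely such a $C$ gives $|C|=F_C(M,x)=F(M,x)$, and combined with the inequality this pins down $\alpha(G)=|C|=F(M,x)$, which is the ``moreover'' clause. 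For part (3), with the ratios $r_u=(M)_{uu}/x_u^2$ in decreasing order and $T=\{1,\dots,t\}$, apply (1) to a maximum independent set $C$ (so $|C|=\alpha(G)\geq t$): since $T$ collects the $t$ largest ratios, the average of $r_u$ over $T$ dominates the average over the top $\alpha(G)$ ratios, which in turn dominates the average over $C$; writing $\beta=x^\top M^\#x>0$ this reads $\alpha(G)\leq\beta\,\overline{r}_C\leq\beta\,\overline{r}_T=F_T(M,x)$. I expect the Cauchy--Schwarz equality bookkeeping in the two iff-statements of (1) and (2) to be the only delicate point; the inequalities themselves are immediate once the engine is in place.
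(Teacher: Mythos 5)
Your proposal is correct and follows essentially the same route as the paper: the same Cauchy--Schwarz step applied to $M^{\frac12}z$ and $M^{-\frac12}x$ via the identity $M^{\frac12}M^{-\frac12}x=x$, the same test vector supported on $S$ with entries $x_u^{-1}$, the same use of condition (b) to discard the off-diagonal terms, and the same averaging/monotonicity arguments for parts (2) and (3). Your explicit remarks that $MM^\#$ is the orthogonal projection onto $R(M)$ and that $x^\top M^\#x>0$ are minor additions of rigor the paper leaves implicit; no substantive difference.
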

\begin{proof}
Since $M^{\frac{1}{2}}M^{-\frac{1}{2}}M=M$ and $x\in R(M)$, we have
\begin{eqnarray*}
M^{\frac{1}{2}}M^{-\frac{1}{2}}x=x.
\end{eqnarray*}
For any independent set $S$ of $G$, let $y=(y_1,\ldots,y_n)^\top$ be the vector such that
\begin{eqnarray*}
y_u=\begin{cases}x_u^{-1}~~~~~~~~~~~~~\mbox{if}~u\in S,\\
0~~~~~~~~~~~~~~~~\mbox{if}~u\notin S.\end{cases}
\end{eqnarray*}
By the Cauchy-Schwarz inequality, we have
\begin{eqnarray*}
|S|^2=(y^\top x)^2=(y^\top M^{\frac{1}{2}}M^{-\frac{1}{2}}x)^2\leq(y^\top My)(x^\top M^\#x).
\end{eqnarray*}
Since $(M)_{ij}x_ix_j\leq0$ when $i,j$ are two nonadjacent vertices, we get
\begin{eqnarray*}
|S|^2&\leq&(y^\top My)(x^\top M^\#x)\leq x^\top M^\#x\sum_{u\in S}\frac{(M)_{uu}}{x_u^2},\\
|S|&\leq&F_S(M,x),
\end{eqnarray*}
with equality if and only if $M[S]$ is diagonal and there exists a constant $c$ such that $M^{\frac{1}{2}}y=cM^{-\frac{1}{2}}x$, that is, $My=cx$ (because $M^{\frac{1}{2}}M^{-\frac{1}{2}}x=x$). When $M[S]$ is diagonal, $My=cx$ is equivalent to
\begin{eqnarray*}
\frac{(M)_{vv}}{x_v^2}&=&c~(v\in S),\\
cx_v&=&\sum_{u\in S}(M)_{vu}x_u^{-1}~(v\notin S).
\end{eqnarray*}
So part (1) holds.

Notice that $|S|\leq F_S(M,x)\leq F(M,x)$ for any independent set $S$ of $G$. Hence
\begin{eqnarray*}
\alpha(G)\leq F(M,x).
\end{eqnarray*}
If the equality holds, then $G$ has an independent set $C$ such that $M[C]$ is diagonal and
\begin{eqnarray*}
\frac{(M)_{vv}}{x_v^2}&=&\max_{u\in V(G)}\frac{(M)_{uu}}{x_u^2}=c~(v\in C),\\
cx_v&=&\sum_{u\in C}(M)_{vu}x_u^{-1}~(v\notin C).
\end{eqnarray*}
If an independent set $C$ satisfies the above conditions, then
\begin{eqnarray*}
|C|^2&=&(z^\top x)^2=(z^\top M^{\frac{1}{2}}M^{-\frac{1}{2}}x)^2=(z^\top Mz)(x^\top M^\#x)=|C|F(M,x),\\
|C|&=&F(M,x),
\end{eqnarray*}
where $z=(z_1,\ldots,z_n)^\top$ is the vector such that
\begin{eqnarray*}
z_u=\begin{cases}x_u^{-1}~~~~~~~~~~~~~\mbox{if}~u\in C,\\
0~~~~~~~~~~~~~~~~\mbox{if}~u\notin C.\end{cases}
\end{eqnarray*}
Since $\alpha(G)\leq F(M,x)$, we have
\begin{eqnarray*}
|C|=\alpha(G)=F(M,x).
\end{eqnarray*}
So part (2) holds.

Suppose that $\frac{(M)_{11}}{x_1^2}\geq\cdots\geq\frac{(M)_{nn}}{x_n^2}$ are arranged in a decreasing order. Let $C$ be an independent set such that $|C|=\alpha(G)$. If $\alpha(G)\geq t$, then
\begin{eqnarray*}
\alpha(G)\leq F_C(M,x)\leq F_T(M,x)~(T=\{1,\ldots,t\}).
\end{eqnarray*}
So part (3) holds.
\end{proof}

\begin{rem}
For a graph $G$, let $M$ be a positive definite matrix indexed by vertices of $G$ such that $(M)_{ij}=0$ if $i,j$ are two nonadjacent distinct vertices. Then $M^\#=M^{-1}$ and $(M,x)\in\mathcal{P}(G)$ for any real vector $x$. By part (2) of Theorem \ref{thm3.1}, we have
\begin{eqnarray*}
\alpha(G)\leq F(M,x)=x^\top M^{-1}x\max_{u\in V(G)}\frac{(M)_{uu}}{x_u^2}
\end{eqnarray*}
for any total nonzero vector $x$.
\end{rem}
The \textit{$\{1\}$-inverse} of $M$ is a matrix $X$ such that $MXM=M$. If $M$ is singular, then it has infinite $\{1\}$-inverses (see \textup{[2, page 41]}).
\begin{rem}
Let $M$ be a real symmetric matrix. Since $MM^\#M=MM^{(1)}M=M$ for any $\{1\}$-inverse $M^{(1)}$ of $M$, we have $x^\top M^\#x=x^\top M^{(1)}x$ for any $x\in R(M)$. Hence the upper bounds given in Theorem \ref{thm3.1} can be obtained from any $\{1\}$-inverse of $M$.
\end{rem}
The \textit{signless Laplacian matrix} of a graph $G$ is defined as $D+A$, where $D$ is the diagonal matrix of vertex degrees of $G$, $A$ is the adjacency matrix of $G$. The \textit{subdivision graph} of $G$, denoted by $S(G)$, is a graph obtained from $G$ by replacing each edge of $G$ by a path of length $2$.

We can find concrete examples such that an upper bound derived from Theorem \ref{thm3.1} is smaller than the Hoffman bound given in Theorem \ref{thm1.2}.
\begin{example}
Let $G$ be a nonregular graph with $n$ vertices, $m$ edges and minimum degree $\delta\geq2$. Then there exists $(M,x)\in\mathcal{P}(S(G))$ such that
\begin{eqnarray*}
\alpha(S(G))=x^\top M^\#x\max_{u\in V(S(G))}\frac{(M)_{uu}}{x_u^2}<\frac{(m+n)\lambda^2}{4+\lambda^2},
\end{eqnarray*}
where $\lambda$ is the maximum adjacency eigenvalue of $S(G)$.
\end{example}
\begin{proof}
The signless Laplacian matrix of $S(G)$ can be written as $M=\begin{pmatrix}2I&B^\top\\B&D\end{pmatrix}$, where $B$ is the vertex-edge incidence matrix of $G$, $D$ is the diagonal matrix of vertex degrees of $G$. Let $x=\begin{pmatrix}2I&B^\top\\B&D\end{pmatrix}\begin{pmatrix}e\\0\end{pmatrix}=\begin{pmatrix}2e\\y\end{pmatrix}$, where $e$ is the all-ones vector, $y$ is the vector of vertex degrees of $G$. Let $C$ be the independent set in $S(G)$ corresponding to the edge set of $G$, then $(M,x)\in\mathcal{P}(S(G))$ and $C$ satisfy the conditions given in part (2) of Theorem \ref{thm3.1}. By part (2) of Theorem \ref{thm3.1}, we get
\begin{eqnarray*}
\alpha(S(G))=x^\top M^\#x\max_{u\in V(S(G))}\frac{(M)_{uu}}{x_u^2}=|C|=m.
\end{eqnarray*}
Let $\lambda$ be the maximum adjacency eigenvalue of $S(G)$, then $-\lambda$ is the minimum adjacency eigenvalue of $S(G)$ (because $S(G)$ is bipartite). By Theorem \ref{thm1.2}, we have
\begin{eqnarray*}
\alpha(S(G))=x^\top M^\#x\max_{u\in V(S(G))}\frac{(M)_{uu}}{x_u^2}=m\leq\frac{(m+n)\lambda^2}{4+\lambda^2}.
\end{eqnarray*}
We next show that the inequality is strict. It is known that $\lambda^2$ is the maximum eigenvalue of the signless Laplacian matrix of $G$, and $\lambda^2\geq\frac{4m}{n}$, with equality if and only if $G$ is regular (see [7, Theorems 2.4.4 and 7.8.6]). Since $G$ is nonregular, we have $\lambda^2>\frac{4m}{n}$, that is $m<\frac{(m+n)\lambda^2}{4+\lambda^2}$.
\end{proof}

For a connected graph $G$, if $M$ be a nonnegative matrix indexed by $V(G)$ such that $(M)_{ij}>0$ if and only if $i,j$ are two adjacent vertices, then $M$ is irreducible. By the Perron-Frobenius Theorem, the spectral radius $\rho$ of $M$ is a positive eigenvalue of $M$, and there exists unique positive unit eigenvector associated with $\rho$. The following eigenvalue bound follows from Theorem \ref{thm3.1}.
\begin{cor}
Let $M$ be a positive semidefinite nonnegative matrix associated with a connected graph $G$, and $(M)_{ij}>0$ if and only if $i,j$ are two adjacent vertices. Let $\rho$ be the spectral radius of $M$, and let $x$ be the corresponding positive unit eigenvector. Then
\begin{eqnarray*}
\alpha(G)\leq\rho^{-1}\max_{u\in V(G)}\frac{(M)_{uu}}{x_u^2}.
\end{eqnarray*}
\end{cor}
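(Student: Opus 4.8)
The plan is to apply part (2) of Theorem \ref{thm3.1} directly to the pair $(M,x)$ and then simplify the resulting value $F(M,x)$ using the spectral information supplied by the Perron-Frobenius Theorem.

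First I would verify that $(M,x)\in\mathcal{P}(G)$. Condition (a) holds since $M$ is positive semidefinite by hypothesis. For condition (b), since $x$ is the positive unit eigenvector associated with the spectral radius $\rho$, every entry $x_u$ is positive, so $x$ is total nonzero; moreover $Mx=\rho x$ with $\rho>0$ gives $x=M(\rho^{-1}x)\in R(M)$. Finally, whenever $i,j$ are two nonadjacent distinct vertices, the hypothesis that $(M)_{ij}>0$ if and only if $i,j$ are adjacent forces $(M)_{ij}=0$, so $(M)_{ij}x_ix_j=0\leq0$. Thus all the requirements defining $\mathcal{P}(G)$ are met.

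Next I would invoke Theorem \ref{thm3.1}(2) to obtain
\begin{eqnarray*}
\alpha(G)\leq F(M,x)=x^\top M^\#x\max_{u\in V(G)}\frac{(M)_{uu}}{x_u^2},
\end{eqnarray*}
so the only remaining task is to evaluate $x^\top M^\#x$. Because $M$ is positive semidefinite, $M^\#$ exists, and since $x$ is an eigenvector of $M$ for the nonzero eigenvalue $\rho$, Lemma \ref{lem2.1} yields $M^\#x=\rho^{-1}x$. Hence $x^\top M^\#x=\rho^{-1}x^\top x=\rho^{-1}$, using that $x$ is a unit vector, and substituting this into the displayed inequality gives exactly the asserted bound.

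There is no serious obstacle here, as the statement is essentially a specialization of Theorem \ref{thm3.1}(2). The only points requiring attention are confirming membership in $\mathcal{P}(G)$ -- in particular that the off-diagonal entries on nonadjacent pairs vanish (so the sign condition holds automatically) and that the Perron eigenvector lands in $R(M)$ -- together with the observation that the irreducibility of $M$ inherited from the connectedness of $G$ is what guarantees, via Perron-Frobenius, that a positive unit eigenvector $x$ exists in the first place.
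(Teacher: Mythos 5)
Your proposal is correct and follows essentially the same route as the paper: verify $(M,x)\in\mathcal{P}(G)$, apply Lemma \ref{lem2.1} to get $M^{\#}x=\rho^{-1}x$ (hence $x^{\top}M^{\#}x=\rho^{-1}$ for the unit vector $x$), and conclude via part (2) of Theorem \ref{thm3.1}. Your verification of membership in $\mathcal{P}(G)$ is in fact more explicit than the paper's one-line justification.
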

\begin{proof}
Since $Mx=\rho x$, we have $(M,x)\in\mathcal{P}(G)$. By Lemma \ref{lem2.1}, we get
\begin{eqnarray*}
M^\#x=\rho^{-1}x.
\end{eqnarray*}
By part (2) of Theorem \ref{thm3.1}, we get
\begin{eqnarray*}
\alpha(G)\leq F(M,x)=x^\top M^\#x\max_{u\in V(G)}\frac{(M)_{uu}}{x_u^2}=\rho^{-1}\max_{u\in V(G)}\frac{(M)_{uu}}{x_u^2}.
\end{eqnarray*}
\end{proof}
The \textit{Laplacian matrix} of a graph $G$ is defined as $L=D-A$, where $D$ is the diagonal matrix of vertex degrees of $G$, $A$ is the adjacency matrix of $G$. It is well known that $L$ is positive semidefinite. The following result follows from Theorem \ref{thm3.1}.
\begin{cor}
Let $G$ be a connected graph with Laplacian matrix $L$. For any total nonzero vector $x$ satisfying $\sum_{u\in V(G)}x_u=0$, we have
\begin{eqnarray*}
\alpha(G)\leq F(L,x).
\end{eqnarray*}
\end{cor}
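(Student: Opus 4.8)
The plan is to show that the pair $(L,x)$ lies in $\mathcal{P}(G)$ under the stated hypotheses and then invoke part (2) of Theorem \ref{thm3.1} directly, which immediately yields $\alpha(G)\leq F(L,x)$. Thus the entire task reduces to verifying the two defining conditions (a) and (b) of $\mathcal{P}(G)$ for the Laplacian $L$ and the prescribed vector $x$.

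Condition (a) is essentially free: $L=D-A$ is symmetric and positive semidefinite (as recalled in the excerpt) and is indexed by $V(G)$, so (a) holds at once. For condition (b) I would first dispose of the sign requirement. The off-diagonal entries satisfy $(L)_{ij}=-(A)_{ij}$ for $i\neq j$, so if $i,j$ are two nonadjacent distinct vertices then $(A)_{ij}=0$, giving $(L)_{ij}=0$ and hence $(L)_{ij}x_ix_j=0\leq0$. Therefore the sign condition in (b) is automatically satisfied for every admissible $x$, with no extra hypothesis required.

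The crux is to check the remaining part of (b), namely $x\in R(L)$, and here I would use the connectivity of $G$. Because $L$ is the Laplacian of a connected graph, its kernel is one-dimensional and spanned by the all-ones vector $e$; equivalently, $0$ is a simple eigenvalue with eigenvector $e$. Since $L$ is symmetric, $R(L)$ is the orthogonal complement of its kernel, so $R(L)=\{x:e^\top x=0\}=\{x:\sum_{u\in V(G)}x_u=0\}$. The hypothesis $\sum_{u\in V(G)}x_u=0$ is then exactly the statement $x\in R(L)$. Together with the assumption that $x$ is total nonzero, this finishes the verification of (b), so $(L,x)\in\mathcal{P}(G)$, and part (2) of Theorem \ref{thm3.1} gives $\alpha(G)\leq F(L,x)$.

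I expect no genuinely hard step; the one point needing care is the identification $R(L)=\{x:\sum_{u\in V(G)}x_u=0\}$, which relies on connectivity to guarantee that the Laplacian kernel is \emph{exactly} the span of $e$. Were $G$ disconnected this identification would fail, and the sum-to-zero condition alone would not place $x$ in $R(L)$; this is precisely why connectivity is an essential hypothesis of the corollary.
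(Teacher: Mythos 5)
Your proof is correct and follows essentially the same route as the paper's: use connectivity to conclude that the kernel of $L$ is spanned by $e$, deduce $x\in R(L)$ from $e^\top x=0$, and apply part (2) of Theorem \ref{thm3.1}. Your additional explicit check of the sign condition in the definition of $\mathcal{P}(G)$ (which holds trivially since $(L)_{ij}=0$ for nonadjacent $i\neq j$) is a detail the paper leaves implicit.
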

\begin{proof}
Since $G$ is connected, the null space of $L$ has dimension one. Since $Le=0$ for the all-ones vector $e$, we have $x\in R(L)$ for any vector $x$ satisfying $e^\top x=\sum_{u\in V(G)}x_u=0$. By part (2) of Theorem \ref{thm3.1}, we get
\begin{eqnarray*}
\alpha(G)\leq F(L,x).
\end{eqnarray*}
\end{proof}

\section{Lov\'{a}sz theta function and Shannon capacity}
For a graph $G$, let $\mathcal{M}(G)$ denote the set of real matrix-vector pairs $(M,x)$ such that\\
(a) $M$ is a positive semidefinite matrix indexed by vertices of $G$ such that $(M)_{ij}=0$ if $i,j$ are two nonadjacent distinct vertices.\\
(b) $x\in R(M)$ is a total nonzero vector.

Since $\mathcal{M}(G)$ is a subset of $\mathcal{P}(G)$, the notations $F(M,x)$ and $F_T(M,x)$ defined in Section 3 can be used for $(M,x)\in\mathcal{M}(G)$ directly.

In the following theorem, we show that the Lov\'{a}sz theta function $\vartheta(G)$ is a special upper bound given in Theorem \ref{thm3.1}, and obtain the necessary and sufficient conditions of graphs satisfying $\alpha(G)=\vartheta(G)$.
\begin{thm}\label{thm4.1}
For any graph $G$, we have
\begin{eqnarray*}
\alpha(G)\leq\min_{(M,x)\in\mathcal{M}(G)}F(M,x)=\vartheta(G),
\end{eqnarray*}
with equality if and only if there exist $(M,x)\in\mathcal{M}(G)$ and an independent set $C$ satisfying the conditions given in part (2) of Theorem \ref{thm3.1}.
\end{thm}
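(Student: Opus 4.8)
The inequality $\alpha(G)\le\min_{(M,x)\in\mathcal{M}(G)}F(M,x)$ requires no new work: since $\mathcal{M}(G)\subseteq\mathcal{P}(G)$, part (2) of Theorem \ref{thm3.1} applies verbatim to every $(M,x)\in\mathcal{M}(G)$. The substance of the theorem is the identity $\min_{(M,x)\in\mathcal{M}(G)}F(M,x)=\vartheta(G)$, which I would prove as two opposite inequalities. The bridge in both directions is the translation between a pair $(M,x)$ and an orthonormal representation $\{u_1,\ldots,u_n\}$ of $G$ together with a unit handle vector $c$, exploiting the defining formula $\vartheta(G)=\min\min_c\max_i(c^\top u_i)^{-2}$.

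To show $\vartheta(G)\le F(M,x)$ for each $(M,x)\in\mathcal{M}(G)$, I would first record that $(M)_{uu}>0$ for every vertex $u$: if a diagonal entry of the positive semidefinite matrix $M$ vanished, the whole corresponding row would vanish, forcing $x_u=0$ for $x\in R(M)$ and contradicting that $x$ is total nonzero. Writing $M=(M^{\frac{1}{2}})^2$ and letting $v_i$ denote the $i$-th column of $M^{\frac{1}{2}}$, one has $v_i^\top v_j=(M)_{ij}$, so the unit vectors $u_i=v_i/\sqrt{(M)_{ii}}$ satisfy $u_i^\top u_j=0$ whenever $i,j$ are nonadjacent; that is, $\{u_i\}$ is an orthonormal representation of $G$. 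Putting $w=M^{-\frac{1}{2}}x$ gives $x=M^{\frac{1}{2}}w$ (as already used in the proof of Theorem \ref{thm3.1}) and $x^\top M^\# x=\|w\|^2$, hence $x_i=v_i^\top w=\sqrt{(M)_{ii}}\,u_i^\top w$. Taking the handle $c=w/\|w\|$ then gives $(c^\top u_i)^{-2}=\|w\|^2(M)_{ii}/x_i^2$, so $\max_i(c^\top u_i)^{-2}=F(M,x)$. Therefore the value of $\{u_i\}$ is at most $F(M,x)$, and $\vartheta(G)\le F(M,x)$.

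For the reverse inequality I would reverse the construction. Choose an orthonormal representation $\{u_1,\ldots,u_n\}$ and a unit handle $c$ attaining $\vartheta(G)=\max_i(c^\top u_i)^{-2}$; finiteness of $\vartheta(G)$ forces $c^\top u_i\ne0$ for all $i$. With $U=[u_1,\ldots,u_n]$, set $M=U^\top U$ and $x=U^\top c$, i.e. $x_i=c^\top u_i$. Then $M$ is positive semidefinite with $(M)_{ii}=1$ and $(M)_{ij}=0$ for nonadjacent $i,j$, the vector $x$ is total nonzero, and $x\in R(U^\top)=R(M)$ by Lemma \ref{lem2.2}; hence $(M,x)\in\mathcal{M}(G)$. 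The same lemma gives $M^\# U^\top=U^+$, so $x^\top M^\# x=c^\top UU^+c\le1$ by Lemma \ref{lem2.4}. Consequently $F(M,x)=x^\top M^\# x\max_i x_i^{-2}\le\max_i(c^\top u_i)^{-2}=\vartheta(G)$. Combining with the previous paragraph shows $F(M,x)=\vartheta(G)$ for this pair, so the minimum is genuinely attained and equals $\vartheta(G)$.

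For the equality characterization, note that $\alpha(G)\le\vartheta(G)=\min_{(M,x)}F(M,x)$. If $\alpha(G)=\vartheta(G)$, then a minimizing pair $(M,x)$ satisfies $\alpha(G)=F(M,x)$, and the equality clause of part (2) of Theorem \ref{thm3.1} yields an independent set $C$ meeting the stated conditions. Conversely, if some $(M,x)\in\mathcal{M}(G)$ and independent set $C$ satisfy those conditions, the ``Moreover'' clause of part (2) of Theorem \ref{thm3.1} gives $\alpha(G)=F(M,x)$, and since $F(M,x)\ge\vartheta(G)\ge\alpha(G)$ all three quantities coincide, so $\alpha(G)=\vartheta(G)$. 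I expect the main obstacle to lie not in any single conceptual leap but in the careful bookkeeping of the forward bridge—establishing $(M)_{uu}>0$, deriving the identity $x_i=\sqrt{(M)_{ii}}\,u_i^\top w$, and reducing $x^\top M^\# x$ to $c^\top UU^+c\le1$ through the generalized-inverse identities of Lemmas \ref{lem2.2} and \ref{lem2.4}.
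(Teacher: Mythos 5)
Your proposal is correct, and it follows the paper for the easy inequality (via Theorem \ref{thm3.1}), for the equality characterization, and for the direction $\min_{(M,x)}F(M,x)\le\vartheta(G)$ (the construction $M=U^\top U$, $x=U^\top c$ from an optimal orthonormal representation is exactly the paper's). Where you genuinely diverge is the direction $\vartheta(G)\le F(M,x)$: the paper follows Lov\'{a}sz's Theorem 3, invoking the eigenvalue characterization $\vartheta(G)=\min_A\lambda_1(A)$ of Lemma \ref{lem2.5}, building the matrix $A$ with $a_{ij}=1-(M_0)_{ij}/(y_iy_j)$, and showing $\beta I-A$ is positive semidefinite for a minimizing pair $(M_0,y)$; you instead turn an arbitrary $(M,x)\in\mathcal{M}(G)$ directly into an orthonormal representation $u_i=v_i/\sqrt{(M)_{ii}}$ (columns of $M^{1/2}$, normalized, orthogonal on non-edges because $(M)_{ij}=0$ there) with handle $c=M^{-\frac12}x/\|M^{-\frac12}x\|$, and verify $\max_i(c^\top u_i)^{-2}=F(M,x)$. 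Your route is more elementary in that it uses only the primal definition of $\vartheta$ and needs no appeal to Lov\'{a}sz duality, and it proves the bound for \emph{every} pair rather than only for a minimizer --- which incidentally closes a small gap in the paper, whose proof asserts without comment that a minimizing pair $(M_0,y)$ exists. What the paper's heavier argument buys is reusability: the same $\beta I-A\succeq0$ computation carries over verbatim to Theorem \ref{thm5.1} for the Schrijver theta function, where your construction would break down because for $(M,x)\in\mathcal{P}(G)$ one only has $(M)_{ij}x_ix_j\le0$ on non-edges rather than $(M)_{ij}=0$, so the normalized columns of $M^{1/2}$ no longer form an orthonormal representation.
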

\begin{proof}
By Theorem \ref{thm3.1}, we have
\begin{eqnarray*}
\alpha(G)\leq\min_{(M,x)\in\mathcal{M}(G)}F(M,x),
\end{eqnarray*}
with equality if and only if there exist $(M,x)\in\mathcal{M}(G)$ and an independent set $C$ satisfying the conditions in part (2) of Theorem \ref{thm3.1}. We need to prove that the minimum value equals to $\vartheta(G)$.

We first prove that the minimum value does not exceed $\vartheta(G)$. Let $c,u_1,\ldots,u_n$ be unit real vectors such that $\{u_1,\ldots,u_n\}$ is an orthonormal representation of $G$ and
\begin{eqnarray*}
\vartheta(G)=\max_{1\leq i\leq n}\frac{1}{(c^\top u_i)^2}.
\end{eqnarray*}
Let $U$ be the matrix whose $i$-th column is $u_i$, then each diagonal entry of $U^\top U$ is $1$. Take $z=U^\top c$, then $(U^\top U,z)\in\mathcal{M}(G)$ and
\begin{eqnarray*}
z_i=c^\top u_i\neq0,~i=1,\ldots,n.
\end{eqnarray*}
By Lemmas \ref{lem2.2} and \ref{lem2.4}, we have
\begin{eqnarray*}
F(U^\top U,z)=z^\top(U^\top U)^\#z\max_{1\leq i\leq n}\frac{1}{z_i^2}=\vartheta(G)c^\top UU^+c\leq\vartheta(G).
\end{eqnarray*}
So we have
\begin{eqnarray*}
\min_{(M,x)\in\mathcal{M}(G)}F(M,x)\leq F(U^\top U,z)\leq\vartheta(G).
\end{eqnarray*}

We turn to prove that $\min_{(M,x)\in\mathcal{M}(G)}F(M,x)$ is at least $\vartheta(G)$. There exists $(M_0,y)\in\mathcal{M}(G)$ such that
\begin{eqnarray*}
F(M_0,y)=y^\top(M_0)^\#y\max_{1\leq i\leq n}\frac{(M_0)_{ii}}{y_i^2}=\min_{(M,x)\in\mathcal{M}(G)}F(M,x).
\end{eqnarray*}
Since $M_0$ is positive semidefinite, there exists real matrix $B$ such that $M_0=B^\top B$. Since $y\in R(M_0)$, we can choose $y$ such that $y=B^\top c$ for a unit real vector $c\in R(B)$. Let $b_i$ be the $i$-th column of $B$. By Lemmas \ref{lem2.2} and \ref{lem2.4}, we have
\begin{eqnarray*}
F(M_0,y)=y^\top(M_0)^\#y\max_{1\leq i\leq n}\frac{(M_0)_{ii}}{y_i^2}=c^\top BB^+c\max_{1\leq i\leq n}\frac{b_i^\top b_i}{(c^\top b_i)^2}=\max_{1\leq i\leq n}\frac{b_i^\top b_i}{(c^\top b_i)^2}.
\end{eqnarray*}
We next prove that $F(M_0,y)\geq\vartheta(G)$ by using the method similar with proof of [19, Thereom 3]. Let $A=(a_{ij})_{n\times n}$ be the matrix with entries
\begin{eqnarray*}
a_{ii}&=&1,~i=1,\ldots,n,\\
a_{ij}&=&1-\frac{b_i^\top b_j}{(c^\top b_i)(c^\top b_j)}=1-\frac{(M_0)_{ij}}{y_iy_j},~i\neq j.
\end{eqnarray*}
By Lemma \ref{lem2.5}, we get
\begin{eqnarray*}
\lambda_1(A)\geq\vartheta(G).
\end{eqnarray*}
We set
\begin{eqnarray*}
\beta=F(M_0,y)=\max_{1\leq i\leq n}\frac{b_i^\top b_i}{(c^\top b_i)^2},
\end{eqnarray*}
then
\begin{eqnarray*}
-a_{ij}&=&\left(c-\frac{b_i}{(c^\top b_i)}\right)^\top\left(c-\frac{b_j}{(c^\top b_j)}\right),~i\neq j,\\
\beta-a_{ii}&=&\left(c-\frac{b_i}{(c^\top b_i)}\right)^\top\left(c-\frac{b_i}{(c^\top b_i)}\right)+\beta-\frac{b_i^\top b_i}{(c^\top b_i)^2}.
\end{eqnarray*}
Hence $\beta I-A$ ($I$ is the identity matrix) is positive semidefinite, that is
\begin{eqnarray*}
\min_{(M,x)\in\mathcal{M}(G)}F(M,x)=\beta\geq\lambda_1(A)\geq\vartheta(G).
\end{eqnarray*}
\end{proof}
The following conclusion comes from Theorem \ref{thm3.1}, Theorem \ref{thm4.1} and Lemma \ref{lem2.5}.
\begin{thm}\label{thm4.2}
Let $(M,x)\in\mathcal{M}(G)$ be a matrix-vector pair for graph $G$. If $G$ has an independent set $C$ such that $(M,x)$ and $C$ satisfy the conditions given in part (2) of Theorem \ref{thm3.1}, then
\begin{eqnarray*}
\alpha(G)=\Theta(G)=\vartheta(G)=|C|=F(M,x).
\end{eqnarray*}
\end{thm}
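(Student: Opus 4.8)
The plan is to chain together three inequalities and show they must all collapse to equalities under the stated hypotheses. I would begin by recalling the fundamental sandwich from Lemma \ref{lem2.5}, namely $\alpha(G)\leq\Theta(G)\leq\vartheta(G)$, which holds for every graph. This already pins down all four of $\alpha(G)$, $\Theta(G)$, $\vartheta(G)$ between the outer values, so the entire task reduces to proving that the leftmost and rightmost ends coincide with $|C|=F(M,x)$.

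Next I would invoke the hypothesis that $(M,x)$ and $C$ satisfy the equality conditions in part (2) of Theorem \ref{thm3.1}. Since $(M,x)\in\mathcal{M}(G)\subseteq\mathcal{P}(G)$, the final clause of part (2) of Theorem \ref{thm3.1} applies verbatim and yields
\begin{eqnarray*}
\alpha(G)=|C|=F(M,x).
\end{eqnarray*}
This already identifies $\alpha(G)$ with both $|C|$ and $F(M,x)$, so three of the five quantities in the claimed chain are tied together.

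The remaining work is to force $\vartheta(G)$ into the same value. Here I would use Theorem \ref{thm4.1}, which asserts $\vartheta(G)=\min_{(M,x)\in\mathcal{M}(G)}F(M,x)$. Because our particular pair $(M,x)$ lies in $\mathcal{M}(G)$, the minimum satisfies $\vartheta(G)\leq F(M,x)$. Combining with $F(M,x)=\alpha(G)$ from the previous step gives $\vartheta(G)\leq\alpha(G)$. But the Lov\'{a}sz sandwich already gives $\alpha(G)\leq\vartheta(G)$, so the two inequalities squeeze $\vartheta(G)=\alpha(G)$, and by transitivity the whole chain $\alpha(G)=\Theta(G)=\vartheta(G)=|C|=F(M,x)$ closes up.

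I do not anticipate a genuine obstacle: the statement is essentially a bookkeeping corollary that stitches together Theorem \ref{thm3.1}(2), Theorem \ref{thm4.1}, and Lemma \ref{lem2.5}. The only point demanding a moment's care is confirming that the equality-attaining pair from $\mathcal{P}(G)$ is legitimately a member of $\mathcal{M}(G)$ so that Theorem \ref{thm4.1} may be applied to it; this is immediate from the hypothesis $(M,x)\in\mathcal{M}(G)$ and the containment $\mathcal{M}(G)\subseteq\mathcal{P}(G)$ noted just before Theorem \ref{thm4.1}. Once that membership is observed, the argument is purely a matter of reading off inequalities in the correct direction and collapsing them.
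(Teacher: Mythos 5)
Your argument is correct and is precisely the route the paper takes: it derives Theorem \ref{thm4.2} by combining the equality clause of part (2) of Theorem \ref{thm3.1} (giving $\alpha(G)=|C|=F(M,x)$), the identity $\vartheta(G)=\min_{(M,x)\in\mathcal{M}(G)}F(M,x)\leq F(M,x)$ from Theorem \ref{thm4.1}, and the sandwich $\alpha(G)\leq\Theta(G)\leq\vartheta(G)$ from Lemma \ref{lem2.5}. Nothing is missing; your write-up simply makes explicit the one-line deduction the paper leaves to the reader.
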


A bipartite graph $G$ is called \textit{semiregular} with parameters $(n_1,n_2,r_1,r_2)$ if $V(G)$ has a bipartition $V(G)=V_1\cup V_2$ such that $|V_1|=n_1,|V_2|=n_2$ and vertices in the same colour class have the same degree ($n_i$ vertices of degree $r_i$, $i=1,2$).

Let $I$ denote the identity matrix. The following are some examples for Theorem \ref{thm4.2}.
\begin{example}
Let $G$ be a graph with a spanning subgraph $H$, and let $H$ be a semiregular bipartite graph with parameters $(n_1,n_2,r_1,r_2)$ ($n_1r_1=n_2r_2>0,n_1\leq n_2$). If the independent set with $n_2$ vertices in $H$ is also an independent set in $G$, then
\begin{eqnarray*}
\alpha(G)=\Theta(G)=\vartheta(G)=n_2.
\end{eqnarray*}
\end{example}
\begin{proof}
The adjacency matrix of $H$ can be written as $A=\begin{pmatrix}0&B^\top\\B&0\end{pmatrix}$, and each row (column) sum of $B\in\mathbb{R}^{n_1\times n_2}$ is $r_1$ ($r_2$). By Lemma \ref{lem2.8}, we have $\lambda_1(A)\leq\sqrt{r_1r_2}$. Then
\begin{eqnarray*}
\lambda_1(BB^\top)=\lambda_1(A^2)\leq r_1r_2.
\end{eqnarray*}
Let $M=\begin{pmatrix}r_1I&B^\top\\B&r_2I\end{pmatrix}$, then the all-ones vector $e\in R(M)$ because $Me=(r_1+r_2)e$. Since the Schur complement
\begin{eqnarray*}
r_2I-r_1^{-1}BB^\top=r_1^{-1}(r_1r_2I-BB^\top)
\end{eqnarray*}
is positive semidefinite, $M$ is positive semidefinite. Hence $(M,e)\in\mathcal{M}(G)$. Let $C$ be the independent set with $n_2$ vertices in $H$, which is also an independent set in $G$. Since $(M,e)$ and $C$ satisfy the conditions given in part (2) of Theorem \ref{thm3.1}, then by Theorem \ref{thm4.2}, we have
\begin{eqnarray*}
\alpha(G)=\Theta(G)=\vartheta(G)=|C|=\alpha(H).
\end{eqnarray*}
\end{proof}
For two disjoint graphs $G_1$ and $G_2$, let $G_1\vee G_2$ denote the graph obtained from the union of $G_1$ and $G_2$ by joining each vertex of $G_1$ to each vertex of $G_2$. Let $\overline{G}$ denote the complement of a graph $G$.
\begin{example}
Let $G$ be an $n$-vertex graph with minimum degree $\delta$. For any integer $s\geq n-\delta$, we have
\begin{eqnarray*}
\alpha(G\vee\overline{K_s})=\Theta(G\vee\overline{K_s})=\vartheta(G\vee\overline{K_s})=s,
\end{eqnarray*}
where $K_s$ is the complete graph with $s$ vertices.
\end{example}
\begin{proof}
The adjacency matrix of $G\vee\overline{K_s}$ can be written as $\begin{pmatrix}0&J_{s\times n}\\J_{s\times n}^\top&A\end{pmatrix}$, where $A$ is the adjacency matrix of $G$, $J_{s\times n}$ is the $s\times n$ all-ones matrix. Let $L$ be the Laplacian matrix of $G$, and let $M=\begin{pmatrix}sI&J_{s\times n}\\J_{s\times n}^\top&nI-L\end{pmatrix}$. Then the all-ones vector $e\in R(M)$ because $Me=(s+n)e$. Since the Schur complement
\begin{eqnarray*}
nI-L-s^{-1}J_{s\times n}^\top J_{s\times n}
\end{eqnarray*}
equals to the Laplacian matrix of $\overline{G}$ (which is positive semidefinite), $M$ is positive semidefinite. Hence $(M,e)\in\mathcal{M}(G)$. Let $C$ be the independent set of size $s$ in $G\vee\overline{K_s}$. Since $(M,e)$ and $C$ satisfy the conditions given in part (2) of Theorem \ref{thm3.1}, then by Theorem \ref{thm4.2}, we have
\begin{eqnarray*}
\alpha(G\vee\overline{K_s})=\Theta(G\vee\overline{K_s})=\vartheta(G\vee\overline{K_s})=|C|=s.
\end{eqnarray*}
\end{proof}
Let $e=(1,\ldots,1)^\top$ denote the all-ones vector.
\begin{cor}\label{thm4.5}
Let $G$ be a graph with adjacency matrix $A$ and $A+\lambda I$ is positive definite for some $\lambda>0$. Then
\begin{eqnarray*}
\alpha(G)\leq\lambda e^\top(A+\lambda I)^{-1}e,
\end{eqnarray*}
with equality if and only if there exists an independent set $C$ such that every vertex not in $C$ is adjacent to exactly $\lambda$ vertices of $C$. Moreover, if an independent set $C$ satisfies the above condition, then
\begin{eqnarray*}
\alpha(G)=\Theta(G)=\vartheta(G)=|C|=\lambda e^\top(A+\lambda I)^{-1}e.
\end{eqnarray*}
\end{cor}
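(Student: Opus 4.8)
The plan is to apply part (2) of Theorem \ref{thm3.1} together with Theorems \ref{thm4.1} and \ref{thm4.2} to the specific pair $(M,x)=(A+\lambda I,\,e)$, where $e$ is the all-ones vector. First I would check that this pair lies in $\mathcal{M}(G)$. Since $A+\lambda I$ is assumed positive definite it is in particular positive semidefinite, and its off-diagonal $(i,j)$-entry equals $(A)_{ij}$, which vanishes whenever $i,j$ are nonadjacent; moreover positive definiteness gives $R(M)=\mathbb{R}^n$, so $e\in R(M)$, and $e$ is clearly total nonzero. Hence $(A+\lambda I,e)\in\mathcal{M}(G)$.

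Next I would evaluate $F(M,e)$. Because $M$ is invertible we have $M^\#=M^{-1}$, and since $G$ is a simple graph the diagonal of $A$ vanishes, so every diagonal entry satisfies $(M)_{uu}=\lambda$. As $e_u=1$ for all $u$, the factor $\max_{u}(M)_{uu}/e_u^2$ equals $\lambda$, and therefore
\begin{eqnarray*}
F(A+\lambda I,e)=e^\top(A+\lambda I)^{-1}e\cdot\lambda=\lambda e^\top(A+\lambda I)^{-1}e.
\end{eqnarray*}
Part (2) of Theorem \ref{thm3.1} (equivalently Theorem \ref{thm4.1}) then yields the claimed inequality $\alpha(G)\leq\lambda e^\top(A+\lambda I)^{-1}e$.

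For the equality characterization I would unwind the three equality conditions of Theorem \ref{thm3.1}(2) for this particular pair. With $x=e$ two of them become vacuous: the requirement that $M[C]$ be diagonal holds automatically, since for distinct $u,v\in C$ the independence of $C$ forces $(M)_{uv}=(A)_{uv}=0$; and the condition $(M)_{vv}/x_v^2=\max_u(M)_{uu}/x_u^2=c$ holds with $c=\lambda$ because every diagonal entry already equals $\lambda$. The only surviving condition, $cx_v=\sum_{u\in C}(M)_{vu}x_u^{-1}$ for $v\notin C$, reduces with $c=\lambda$ and $x=e$ to $\lambda=\sum_{u\in C}(A)_{vu}$, i.e. each vertex outside $C$ has exactly $\lambda$ neighbours in $C$. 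This is precisely the stated combinatorial condition, so equality in the bound is equivalent to the existence of such an independent set $C$.

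Finally, for the ``moreover'' part, whenever an independent set $C$ satisfies this neighbour-counting condition the pair $(A+\lambda I,e)$ together with $C$ meets all the hypotheses of Theorem \ref{thm4.2}, which at once gives $\alpha(G)=\Theta(G)=\vartheta(G)=|C|=F(A+\lambda I,e)=\lambda e^\top(A+\lambda I)^{-1}e$. The argument is essentially a direct specialization, so I do not anticipate a genuine obstacle; the only point demanding care is verifying that the abstract equality conditions of Theorem \ref{thm3.1}(2) collapse exactly to the ``$\lambda$ neighbours in $C$'' statement, which is precisely where the choice $x=e$ and the constant diagonal $\lambda$ of $A+\lambda I$ do all the work.
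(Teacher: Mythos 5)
Your proposal is correct and follows exactly the paper's own (much terser) argument: specialize Theorem \ref{thm3.1}(2) and Theorem \ref{thm4.2} to the pair $(A+\lambda I,e)\in\mathcal{M}(G)$, compute $F(A+\lambda I,e)=\lambda e^\top(A+\lambda I)^{-1}e$, and observe that the abstract equality conditions collapse to the ``every vertex outside $C$ has exactly $\lambda$ neighbours in $C$'' statement. Your unwinding of the equality conditions is a helpful expansion of details the paper leaves implicit, but the route is the same.
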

\begin{proof}
Since $A+\lambda I$ is positive definite, we have $(A+\lambda I,e)\in\mathcal{M}(G)$ and $F(A+\lambda I,e)=\lambda e^\top(A+\lambda I)^{-1}e$. The conclusions follow from Theorems \ref{thm3.1} and \ref{thm4.2}.
\end{proof}
The following is an example for Corollary \ref{thm4.5}.
\begin{example}
For the path $P_{2k+1}$ with $2k+1$ vertices, let $C$ be the independent set in $P_{2k+1}$ with $k+1$ vertices. Then every vertex not in $C$ is adjacent to exactly $2$ vertices of $C$. By Corollary \ref{thm4.5}, we have
\begin{eqnarray*}
\alpha(P_{2k+1})=\Theta(P_{2k+1})=\vartheta(P_{2k+1})=|C|=2e^\top(A+2I)^{-1}e=k+1,
\end{eqnarray*}
where $A$ is the adjacency matrix of $P_{2k+1}$.
\end{example}
A \textit{clique covering} of $G$ is a set $\varepsilon=\{Q_1,\ldots,Q_r\}$ of cliques in $G$ that cover all edges of $G$, that is, each edge of $G$ belongs to at least one $Q_i$. For a clique covering $\varepsilon$ and a vertex $u$ of $G$, the \textit{$\varepsilon$-degree} of $u$ is the number of cliques in $\varepsilon$ containing $u$, denoted by $d_u^\varepsilon$. The \textit{clique covering matrix} $A_\varepsilon$ is the $n\times n$ symmetric matrix with entries
\begin{eqnarray*}
(A_\varepsilon)_{ij}=\begin{cases}d_i^\varepsilon~~~~~~~~~~~~~~\mbox{if}~i=j,\\
w_{ij}~~~~~~~~~~~~\mbox{if}~\{i,j\}\in E(G),\\
0~~~~~~~~~~~~~~~\mbox{if}~\{i,j\}\notin E(G),\end{cases}
\end{eqnarray*}
where $w_{ij}$ is the number of cliques in $\varepsilon$ containing the edge $\{i,j\}$.
\begin{thm}\label{thm4.7}
Let $G$ be a graph without isolated vertices. For any clique covering $\varepsilon$ of $G$, we have
\begin{eqnarray*}
\alpha(G)\leq F(A_\varepsilon,x),
\end{eqnarray*}
where $x$ is a vector satisfying $x_u=d_u^\varepsilon$. The equality holds if and only if there exists an independent set $C$ such that $d_v^\varepsilon=\min_{u\in V(G)}d_u^\varepsilon$ for each $v\in C$, and $|C\cap Q|=1$ for each $Q\in\varepsilon$. Moreover, if an independent set $C$ satisfies the above conditions, then
\begin{eqnarray*}
\alpha(G)=\Theta(G)=\vartheta(G)=|C|=F(A_\varepsilon,x).
\end{eqnarray*}
\end{thm}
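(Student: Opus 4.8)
The plan is to exhibit $(A_\varepsilon,x)$ as a member of $\mathcal{M}(G)$ and then read off both the inequality and the equality characterization from Theorem \ref{thm3.1}(2) and Theorem \ref{thm4.2}. The key structural observation is that $A_\varepsilon$ factors through the vertex-clique incidence matrix. Writing $\varepsilon=\{Q_1,\ldots,Q_r\}$, let $N$ be the $n\times r$ matrix with $(N)_{ik}=1$ if $i\in Q_k$ and $(N)_{ik}=0$ otherwise. First I would verify by a direct count that $A_\varepsilon=NN^\top$: the $(i,i)$-entry counts the cliques through $i$, hence equals $d_i^\varepsilon$; for $i\neq j$ the entry $\sum_k (N)_{ik}(N)_{jk}$ counts the cliques containing both $i$ and $j$, which is $w_{ij}$ when $\{i,j\}\in E(G)$ and $0$ when $i,j$ are nonadjacent, since no clique can contain two nonadjacent vertices. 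This single identity simultaneously delivers that $A_\varepsilon$ is positive semidefinite, that its off-diagonal entries vanish on nonadjacent pairs, and that $x=Ne'$ for the all-ones vector $e'$ of length $r$, because $(Ne')_i=\sum_k (N)_{ik}=d_i^\varepsilon=x_i$. By Lemma \ref{lem2.2}, $R(NN^\top)=R(N)$, so $x=Ne'\in R(A_\varepsilon)$.

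Next I would confirm $(A_\varepsilon,x)\in\mathcal{M}(G)$. Positive semidefiniteness and the vanishing off-diagonal entries on nonadjacent pairs come from the factorization above; $x\in R(A_\varepsilon)$ was just shown; and $x$ is total nonzero because $G$ has no isolated vertices, so every vertex lies on an edge, that edge is covered by some clique of $\varepsilon$, and hence $d_u^\varepsilon\geq 1$ for all $u$. Since $\mathcal{M}(G)\subseteq\mathcal{P}(G)$, Theorem \ref{thm3.1}(2) then gives $\alpha(G)\leq F(A_\varepsilon,x)$ immediately.

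The analytic heart is translating the equality conditions of Theorem \ref{thm3.1}(2) into the stated combinatorial ones, using $x_u=d_u^\varepsilon=(A_\varepsilon)_{uu}$. This gives $(A_\varepsilon)_{uu}/x_u^2=1/d_u^\varepsilon$, so $\max_u (A_\varepsilon)_{uu}/x_u^2=1/m$ with $m=\min_u d_u^\varepsilon$. The first equality condition $(A_\varepsilon)_{vv}/x_v^2=c$ for $v\in C$ then reads $d_v^\varepsilon=m$ for $v\in C$. For the second condition, substituting $c=1/m$ and, by the first condition, $x_u^{-1}=1/m$ for $u\in C$, reduces $cx_v=\sum_{u\in C}(A_\varepsilon)_{vu}x_u^{-1}$ to $d_v^\varepsilon=\sum_{u\in C}(A_\varepsilon)_{vu}$ for $v\notin C$. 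A double-counting step rewrites the right-hand side as $\sum_k [v\in Q_k]\,|C\cap Q_k|$ and the left-hand side as $\sum_k [v\in Q_k]$, so the requirement becomes $\sum_{k:\,v\in Q_k}(1-|C\cap Q_k|)=0$.

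The obstacle I expect is reconciling this apparently local condition with the global statement $|C\cap Q|=1$ for every $Q\in\varepsilon$. The resolution is that $|C\cap Q_k|\le 1$ for all $k$, because $C$ is independent and $Q_k$ is a clique, so each summand $1-|C\cap Q_k|$ is nonnegative and the sum vanishes iff $|C\cap Q_k|=1$ for every clique through $v$. Ranging over all $v\notin C$ forces $|C\cap Q_k|=1$ for every clique meeting $V(G)\setminus C$, while a clique $Q_k\subseteq C$ must be a single vertex (two clique vertices are adjacent and cannot both lie in $C$), for which $|C\cap Q_k|=1$ automatically; hence $|C\cap Q|=1$ for all $Q\in\varepsilon$. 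This yields the claimed equivalence with the conditions $d_v^\varepsilon=m$ for $v\in C$ and $|C\cap Q|=1$ for each $Q\in\varepsilon$. Finally, whenever such a $C$ exists, the membership $(A_\varepsilon,x)\in\mathcal{M}(G)$ together with Theorem \ref{thm4.2} upgrades the equality to $\alpha(G)=\Theta(G)=\vartheta(G)=|C|=F(A_\varepsilon,x)$, completing the argument.
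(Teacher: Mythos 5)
Your proposal is correct and follows essentially the same route as the paper: factor $A_\varepsilon=NN^\top$ through the vertex--clique incidence matrix, observe $x=Ne'\in R(A_\varepsilon)$ so that $(A_\varepsilon,x)\in\mathcal{M}(G)$, and invoke Theorems \ref{thm3.1} and \ref{thm4.2}. The only difference is that you explicitly carry out the translation of the equality conditions of Theorem \ref{thm3.1}(2) into the combinatorial conditions $d_v^\varepsilon=\min_u d_u^\varepsilon$ and $|C\cap Q|=1$ (including the edge case of cliques contained in $C$), a verification the paper leaves implicit; your treatment of it is accurate.
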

\begin{proof}
For a clique covering $\varepsilon=\{Q_1,\ldots,Q_r\}$ of $G$, the corresponding vertex-clique incidence matrix $B$ is a $|V(G)|\times r$ matrix with entries
\begin{eqnarray*}
(B)_{ij}=\begin{cases}1~~~~~~~~~~~~~~~~~~\mbox{if}~i\in V(G),i\in Q_j,\\
0~~~~~~~~~~~~~~~~~~\mbox{if}~i\in V(G),i\notin Q_j.\end{cases}
\end{eqnarray*}
Then $BB^\top=A_\varepsilon$. Notice that $x=Be\in R(B)=R(A_\varepsilon)$, so $(A_\varepsilon,x)\in\mathcal{M}(G)$. The conclusions follow from Theorems \ref{thm3.1} and \ref{thm4.2}.
\end{proof}
Let $m(H)$ denote the matching number of a hypergraph $H$. The \textit{intersection graph} $\Omega(H)$ of $H$ has vertex set $E(H)$, and two vertices $f_1,f_2$ of $\Omega(H)$ are adjacent if and only if $f_1\cap f_2\neq\emptyset$. Clearly, we have $m(H)=\alpha(\Omega(H))$. The vertex-edge incidence matrix $B$ of $H$ is a $|V(H)|\times|E(H)|$ matrix with entries
\begin{eqnarray*}
(B)_{uf}=\begin{cases}1~~~~~~~~~~~~~~~~~~\mbox{if}~u\in V(H),u\in f\in E(H),\\
0~~~~~~~~~~~~~~~~~~\mbox{if}~u\in V(H),u\notin f\in E(H).\end{cases}
\end{eqnarray*}
We say that $H$ is \textit{$k$-uniform} if each edge of $H$ has exactly $k$ vertices.
\begin{thm}\label{thm4.9}
Let $H$ be a $k$-uniform hypergraph without isolated vertices, and let $B$ be the vertex-edge incidence matrix of $H$. Then
\begin{eqnarray*}
m(H)\leq ke^\top(B^\top B)^\#e,
\end{eqnarray*}
with equality if and only if $H$ has a perfect matching. If $H$ has a perfect matching, then
\begin{eqnarray*}
m(H)=\Theta(\Omega(H))=\vartheta(\Omega(H))=ke^\top(B^\top B)^\#e.
\end{eqnarray*}
\end{thm}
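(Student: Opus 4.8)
The plan is to recognize that this statement is a direct application of the independent-set machinery in Theorem~\ref{thm3.1} and Theorem~\ref{thm4.2} to the intersection graph $\Omega(H)$, together with the identity $m(H)=\alpha(\Omega(H))$ stated just before the theorem. The entire result will follow once I exhibit a single good pair $(M,x)\in\mathcal{M}(\Omega(H))$ and then read off the equality conditions.

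First I would take $M=B^\top B$, indexed by the vertices of $\Omega(H)$ (that is, by the edges of $H$). Since $(B^\top B)_{fg}=|f\cap g|$, the diagonal entries are all equal to $k$ by $k$-uniformity, and the $(f,g)$ off-diagonal entry vanishes exactly when $f,g$ are disjoint, i.e.\ nonadjacent in $\Omega(H)$; moreover $B^\top B$ is a Gram matrix and hence positive semidefinite. To verify $e\in R(B^\top B)$ I would use Lemma~\ref{lem2.2} to write $R(B^\top B)=R(B^\top)$ and observe that $B^\top e_V=k\,e$, where $e_V$ is the all-ones vector on $V(H)$ (again using $k$-uniformity); thus $e=B^\top(k^{-1}e_V)\in R(B^\top)$. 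This establishes $(B^\top B,e)\in\mathcal{M}(\Omega(H))$.

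With $x=e$ and all diagonal entries equal to $k$, the functional simplifies to $F(B^\top B,e)=e^\top(B^\top B)^\#e\cdot k=k\,e^\top(B^\top B)^\#e$, so part~(2) of Theorem~\ref{thm3.1} immediately yields $m(H)=\alpha(\Omega(H))\leq k\,e^\top(B^\top B)^\#e$. For the equality case I would feed $(B^\top B,e)$ into the equality conditions of Theorem~\ref{thm3.1}(2): the requirement that $M[C]$ be diagonal and that $(M)_{vv}/x_v^2$ be constant on $C$ is automatic here, since independent sets of $\Omega(H)$ are precisely matchings of $H$ and every diagonal entry already equals $k$. Everything then reduces to the single relation $\sum_{g\in C}|f\cap g|=k$ for each edge $f\notin C$. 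Using that the edges of a matching $C$ are pairwise disjoint, the left side equals $\bigl|f\cap\bigcup_{g\in C}g\bigr|$, so the relation is equivalent to $f\subseteq\bigcup_{g\in C}g$; since $H$ has no isolated vertices, demanding this for every $f$ is equivalent to $C$ covering all of $V(H)$, i.e.\ to $C$ being a perfect matching. Hence equality holds if and only if $H$ has a perfect matching, and when it does Theorem~\ref{thm4.2} upgrades the equality to the full chain $m(H)=\Theta(\Omega(H))=\vartheta(\Omega(H))=k\,e^\top(B^\top B)^\#e$.

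The routine parts (positive semidefiniteness and the range computation) are short, so I expect the only genuine content to be the equality analysis, specifically the translation of the algebraic condition $\sum_{g\in C}|f\cap g|=k$ into the combinatorial statement that $C$ is a perfect matching. The hypothesis that must be invoked carefully there is the absence of isolated vertices, which is exactly what promotes ``every edge is covered by $C$'' to ``every vertex is covered by $C$.''
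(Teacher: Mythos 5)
Your proposal is correct and follows essentially the same route as the paper: take $(B^\top B,e)\in\mathcal{M}(\Omega(H))$, note $e=k^{-1}B^\top e_V\in R(B^\top)=R(B^\top B)$, compute $F(B^\top B,e)=ke^\top(B^\top B)^\#e$, and invoke Theorems \ref{thm3.1} and \ref{thm4.2}. You additionally spell out the translation of the equality condition $\sum_{g\in C}|f\cap g|=k$ into ``$C$ is a perfect matching,'' a step the paper leaves implicit, and your use of the no-isolated-vertices hypothesis there is exactly right.
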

\begin{proof}
Notice that $e=k^{-1}B^\top e\in R(B^\top)=R(B^\top B)$, so $(B^\top B,e)\in\mathcal{M}(\Omega(H))$ and $F(B^\top B,e)=ke^\top(B^\top B)^\#e$. The conclusions follow from Theorems \ref{thm3.1} and \ref{thm4.2}.
\end{proof}

\section{Schrijver theta function}
In the following theorem, we show that the Schrijver theta function $\vartheta(G)$ is the minimum upper bound given in part (2) of Theorem \ref{thm3.1}, and obtain the necessary and sufficient conditions of graphs satisfying $\alpha(G)=\vartheta'(G)$.
\begin{thm}\label{thm5.1}
For any graph $G$, we have
\begin{eqnarray*}
\alpha(G)\leq\min_{(M,x)\in\mathcal{P}(G)}F(M,x)=\vartheta'(G),
\end{eqnarray*}
with equality if and only if there exist $(M,x)\in\mathcal{P}(G)$ and an independent set $C$ satisfying the conditions given in part (2) of Theorem \ref{thm3.1}.
\end{thm}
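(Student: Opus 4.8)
The plan is to follow the two-sided strategy of Theorem \ref{thm4.1}, but with the class $\mathcal{M}(G)$ replaced by the larger class $\mathcal{P}(G)$ and with Lov\'asz's tight constraints ($(A)_{ij}=1$ on the diagonal and for nonadjacent pairs) replaced by Schrijver's relaxed constraints ($(A)_{ij}\geq1$). The inequality $\alpha(G)\leq\min_{(M,x)\in\mathcal{P}(G)}F(M,x)$ is immediate from part (2) of Theorem \ref{thm3.1}, applied to every pair in $\mathcal{P}(G)$ and hence to the minimizer. The stated equality characterization is then just a restatement of the equality case of Theorem \ref{thm3.1}(2): if $\alpha(G)=\min F=\vartheta'(G)$, the minimizer $(M,x)$ satisfies $\alpha(G)=F(M,x)$ and so yields the independent set $C$; conversely such a $(M,x)$ and $C$ force $\alpha(G)=F(M,x)\geq\min F=\vartheta'(G)\geq\alpha(G)$. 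The real content is therefore the identity $\min_{(M,x)\in\mathcal{P}(G)}F(M,x)=\vartheta'(G)$, which I would prove by two opposite inequalities.

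For $\min F\geq\vartheta'(G)$ I would reuse the lower-bound half of the proof of Theorem \ref{thm4.1} almost verbatim. Take a minimizer $(M_0,y)\in\mathcal{P}(G)$, factor $M_0=B^\top B$, and (rescaling $y$, which leaves $F$ invariant) write $y=B^\top c$ with $c$ a unit vector in $R(B)$, so that $F(M_0,y)=\max_{1\leq i\leq n}\frac{b_i^\top b_i}{(c^\top b_i)^2}=:\beta$. Form the symmetric matrix $A$ with $a_{ii}=1$ and $a_{ij}=1-\frac{(M_0)_{ij}}{y_iy_j}$ for $i\neq j$. The only new point is that for nonadjacent $i,j$ the defining condition of $\mathcal{P}(G)$ gives $(M_0)_{ij}y_iy_j\leq0$, hence $a_{ij}\geq1$; thus $A$ is feasible for $\vartheta'(G)$ and $\lambda_1(A)\geq\vartheta'(G)$. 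The same Gram-vector identities as in Theorem \ref{thm4.1}, applied to the vectors $c-\frac{b_i}{c^\top b_i}$, show that $\beta I-A\succeq0$, so $F(M_0,y)=\beta\geq\lambda_1(A)\geq\vartheta'(G)$.

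For $\min F\leq\vartheta'(G)$, which has no analogue in Theorem \ref{thm4.1} (there the upper bound used the orthonormal-representation definition of $\vartheta$, which is unavailable for $\vartheta'$), I would reverse the above construction directly from the eigenvalue formulation. Fix an optimal Schrijver matrix $A^*$ with $\lambda_1(A^*)=\vartheta'(G)=:\beta$, $a^*_{ii}\geq1$ and $a^*_{ij}\geq1$ for nonadjacent pairs. Consider the bordered matrix $\widetilde{G}=\left(\begin{smallmatrix}1 & e^\top\\ e & N\end{smallmatrix}\right)$, where $N_{ij}=1-a^*_{ij}$ for $i\neq j$ and $N_{ii}=\beta-a^*_{ii}+1$. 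Its Schur complement with respect to the $(1,1)$ entry is exactly $N-ee^\top=\beta I-A^*\succeq0$, so $\widetilde{G}\succeq0$ and I may realize it as the Gram matrix of vectors $c,d_1,\dots,d_n$. Setting $D=[d_1\mid\cdots\mid d_n]$, $M_0=D^\top D$ and $y=D^\top c$ gives $y_i=c^\top d_i=1$ (total nonzero) and $(M_0)_{ij}=d_i^\top d_j=1-a^*_{ij}\leq0$ for nonadjacent $i,j$, so $(M_0,y)\in\mathcal{P}(G)$; moreover $\max_{1\leq i\leq n}\frac{(M_0)_{ii}}{y_i^2}=\max_i(\beta-a^*_{ii}+1)\leq\beta$. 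By Lemmas \ref{lem2.2} and \ref{lem2.4}, $y^\top M_0^\#y=c^\top DD^+c\leq1$, whence $F(M_0,y)\leq\beta=\vartheta'(G)$.

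I expect the Schur-complement construction in the last paragraph to be the main obstacle, and it is also where the role of $\mathcal{P}(G)$ becomes visible. The key observation is that Schrijver's relaxation $a^*_{ij}\geq1$ (rather than $=1$) is precisely what makes the off-diagonal entries $N_{ij}=1-a^*_{ij}$ nonpositive, which is exactly the sign condition $(M)_{ij}x_ix_j\leq0$ built into $\mathcal{P}(G)$; and the diagonal slack $\beta-a^*_{ii}+1\leq\beta$ is what keeps $\max_i\frac{(M_0)_{ii}}{y_i^2}$ bounded by $\beta$. Identifying this correspondence, together with the fact that the bordered Gram matrix $\widetilde{G}$ is positive semidefinite exactly when $\beta\geq\lambda_1(A^*)$, is the crux; everything else parallels Theorem \ref{thm4.1}.
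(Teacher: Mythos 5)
Your proposal is correct and takes essentially the same route as the paper: the lower bound $\min_{(M,x)\in\mathcal{P}(G)}F(M,x)\geq\vartheta'(G)$ is the paper's argument verbatim, and your bordered Gram matrix $\widetilde{G}$ yields exactly the paper's pair $(U^\top U,z)$ up to the harmless scalar $\vartheta'(G)$ --- the paper instead factors $\vartheta'(G)I-A=T^\top T$ and adjoins a unit vector $c$ with $c^\top T=0$, which is the same Schur-complement fact in different clothing. The reduction of the equality characterization to part (2) of Theorem \ref{thm3.1} also matches the paper.
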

\begin{proof}
By Theorem \ref{thm3.1}, we have
\begin{eqnarray*}
\alpha(G)\leq\min_{(M,x)\in\mathcal{P}(G)}F(M,x),
\end{eqnarray*}
with equality if and only if there exist $(M,x)\in\mathcal{P}(G)$ and an independent set $C$ satisfying the conditions given in part (2) of Theorem \ref{thm3.1}. We will prove that the minimum value equals to $\vartheta'(G)$ by using the method similar with proof of [19, Thereom 3].

We first prove that the minimum value does not exceed $\vartheta'(G)$. Let $A=(a_{ij})_{n\times n}$ be the real symmetric matrix indexed by vertices of $G$ such that $\lambda_1(A)=\vartheta'(G)$, and $a_{ij}\geq1$ when $i=j$ or $i,j$ are two nonadjacent vertices. Then $\vartheta'(G)I-A$ is positive semidefinite. So there exists real matrix $T$ such that
\begin{eqnarray*}
\vartheta'(G)I-A=T^\top T
\end{eqnarray*}
and the rank of $T$ is smaller than the number of rows of $T$. Let $c$ be a unit real vector such that $c^\top T=0$. Suppose that $t_i$ is the $i$-th column of $T$, then
\begin{eqnarray*}
t_i^\top t_j=\vartheta'(G)\delta_{ij}-a_{ij},
\end{eqnarray*}
where $\delta_{ij}=1$ if $i=j$, and $\delta_{ij}=0$ if $i\neq j$. We set
\begin{eqnarray*}
u_i=\vartheta'(G)^{-\frac{1}{2}}(c+t_i),~i=1,\ldots,n,
\end{eqnarray*}
and let $U$ be the matrix whose $i$-th column is $u_i$. Then
\begin{eqnarray*}
(U^\top U)_{ii}&=&\vartheta'(G)^{-1}(1+\vartheta'(G)-a_{ii})\leq1,~i\in V(G),\\
(U^\top U)_{ij}&=&\vartheta'(G)^{-1}(1-a_{ij})\leq0,~\{i,j\}\notin E(G).
\end{eqnarray*}
Take $z=U^\top c$, then $z_i=c^\top u_i=\vartheta'(G)^{-\frac{1}{2}}$ ($i=1,\ldots,n$) and $(U^\top U,z)\in\mathcal{P}(G)$. By Lemmas \ref{lem2.2} and \ref{lem2.4}, we have
\begin{eqnarray*}
F(U^\top U,z)=z^\top(U^\top U)^\#z\max_{1\leq i\leq n}\frac{(U^\top U)_{ii}}{z_i^2}\leq\vartheta'(G)c^\top UU^+c\leq\vartheta'(G).
\end{eqnarray*}
So we have
\begin{eqnarray*}
\min_{(M,x)\in\mathcal{P}(G)}F(M,x)\leq F(U^\top U,z)\leq\vartheta'(G).
\end{eqnarray*}

We next prove that $\min_{(M,x)\in\mathcal{P}(G)}F(M,x)$ is at least $\vartheta'(G)$. There exists $(M_0,y)\in\mathcal{P}(G)$ such that
\begin{eqnarray*}
F(M_0,y)=y^\top(M_0)^\#y\max_{1\leq i\leq n}\frac{(M_0)_{ii}}{y_i^2}=\min_{(M,x)\in\mathcal{P}(G)}F(M,x).
\end{eqnarray*}
Since $M_0$ is positive semidefinite, there exists matrix $B$ such that $M_0=B^\top B$. Since $y\in R(M_0)$, we can choose $y$ such that $y=B^\top c$ for a unit real vector $c\in R(B)$. Let $b_i$ be the $i$-th column of $B$. By Lemmas \ref{lem2.2} and \ref{lem2.4}, we have
\begin{eqnarray*}
F(M_0,y)=y^\top(M_0)^\#y\max_{1\leq i\leq n}\frac{(M_0)_{ii}}{y_i^2}=c^\top BB^+c\max_{1\leq i\leq n}\frac{b_i^\top b_i}{(c^\top b_i)^2}=\max_{1\leq i\leq n}\frac{b_i^\top b_i}{(c^\top b_i)^2}.
\end{eqnarray*}
Let $A=(a_{ij})_{n\times n}$ be the matrix with entries
\begin{eqnarray*}
a_{ii}&=&1,~i=1,\ldots,n,\\
a_{ij}&=&1-\frac{b_i^\top b_j}{(c^\top b_i)(c^\top b_j)}=1-\frac{(M_0)_{ij}}{y_iy_j},~i\neq j.
\end{eqnarray*}
Since $(M_0)_{ij}y_iy_j\leq0$ for nonadjacent vertices $i,j$, we have $a_{ij}\geq1$ when $i=j$ or $i,j$ are two nonadjacent vertices. By the definition of $\vartheta'(G)$, we have
\begin{eqnarray*}
\lambda_1(A)\geq\vartheta'(G).
\end{eqnarray*}
We set
\begin{eqnarray*}
\beta=F(M_0,y)=\max_{1\leq i\leq n}\frac{b_i^\top b_i}{(c^\top b_i)^2},
\end{eqnarray*}
then
\begin{eqnarray*}
-a_{ij}&=&\left(c-\frac{b_i}{(c^\top b_i)}\right)^\top\left(c-\frac{b_j}{(c^\top b_j)}\right),~i\neq j,\\
\beta-a_{ii}&=&\left(c-\frac{b_i}{(c^\top b_i)}\right)^\top\left(c-\frac{b_i}{(c^\top b_i)}\right)+\beta-\frac{b_i^\top b_i}{(c^\top b_i)^2}.
\end{eqnarray*}
Hence $\beta I-A$ is positive semidefinite, that is
\begin{eqnarray*}
\min_{(M,x)\in\mathcal{P}(G)}F(M,x)=\beta\geq\lambda_1(A)\geq\vartheta'(G).
\end{eqnarray*}
\end{proof}
The following is an example for a graph $G$ satisfying $\alpha(G)=\vartheta'(G)<\vartheta(G)$ (see \cite{Schrijver}).
\begin{example}
\textup{\rm \cite{Schrijver}} Let $G$ be a graph with vertex set $\{0,1\}^6$, and two vertices are adjacent if and only if their Hamming distance is at most $3$. Then $\{000000,001111,110011,111100\}$ is an independent set of $G$. It is known that $4=\vartheta'(G)<\vartheta(G)=16/3$.
\end{example}
By Theorem \ref{thm3.1}, Theorem \ref{thm5.1} and Lemma \ref{lem2.7}, we can get the following conclusion.
\begin{thm}\label{thm5.3}
Let $(M,x)\in\mathcal{P}(G)$ be a matrix-vector pair for graph $G$. If $G$ has an independent set $C$ such that $(M,x)$ and $C$ satisfy the conditions given in part (2) of Theorem \ref{thm3.1}, then
\begin{eqnarray*}
\alpha(G)=\vartheta'(G)=|C|=F(M,x).
\end{eqnarray*}
\end{thm}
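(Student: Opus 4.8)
The plan is to chain together three results already proved in the paper, since the hypotheses of this theorem are tailored precisely to their equality cases. First I would apply the ``Moreover'' clause of part (2) of Theorem~\ref{thm3.1}: because $(M,x)\in\mathcal{P}(G)$ and the independent set $C$ are assumed to satisfy the equality conditions listed there, that clause immediately delivers $\alpha(G)=|C|=F(M,x)$ with no further computation.

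Next I would invoke Theorem~\ref{thm5.1}, which states $\alpha(G)\le\min_{(M,x)\in\mathcal{P}(G)}F(M,x)=\vartheta'(G)$ and, crucially, identifies the equality case $\alpha(G)=\vartheta'(G)$ as holding exactly when there exist some $(M,x)\in\mathcal{P}(G)$ together with an independent set satisfying the part-(2) conditions of Theorem~\ref{thm3.1}. The hypothesis of the present theorem supplies precisely such a pair, so the equality $\alpha(G)=\vartheta'(G)$ follows at once. Lemma~\ref{lem2.7} provides the underlying inequality $\alpha(G)\le\vartheta'(G)$ valid for every graph, but here it is already absorbed into Theorem~\ref{thm5.1} and need not be cited separately.

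Combining the two displayed equalities, namely $\alpha(G)=\vartheta'(G)$ from Theorem~\ref{thm5.1} and $\alpha(G)=|C|=F(M,x)$ from Theorem~\ref{thm3.1}, yields the chain $\alpha(G)=\vartheta'(G)=|C|=F(M,x)$, which is exactly the assertion. There is essentially no hard step: the theorem is a direct corollary obtained by quoting the equality cases of two earlier results. The only point meriting care, and the nearest thing to an obstacle, is observing that the assumption made here is verbatim the equality condition appearing in the characterization of Theorem~\ref{thm5.1}, so that no independent check of the optimality of $F(M,x)$ among pairs in $\mathcal{P}(G)$ is required; everything reduces to invoking the equality conditions already established.
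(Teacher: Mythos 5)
Your proposal is correct and takes essentially the same route as the paper, which proves this result simply by citing Theorem~\ref{thm3.1}, Theorem~\ref{thm5.1} and Lemma~\ref{lem2.7}: the ``Moreover'' clause of part (2) of Theorem~\ref{thm3.1} gives $\alpha(G)=|C|=F(M,x)$, and the identification $\min_{(M,x)\in\mathcal{P}(G)}F(M,x)=\vartheta'(G)$ from Theorem~\ref{thm5.1} sandwiches $\vartheta'(G)$ between $\alpha(G)$ and $F(M,x)$. Your observation that Lemma~\ref{lem2.7} is already subsumed by the inequality stated in Theorem~\ref{thm5.1} is accurate and harmless.
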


\section{Hoffman-type bounds}
In this section, we show that the Hoffman-type bounds in \cite{Brouwer,Godsil,Harant,Lumei} are special cases of Theorem \ref{thm3.1}.

By Theorem \ref{thm3.1}, we can get the following classical Hoffman bound, and obtain the necessary and sufficient conditions of graphs attaining the Hoffman bound. The necessary condition of the equality case given in Theorem \ref{thm1.1} is indeed a sufficient condition.
\begin{cor}\label{thm6.1}
Let $G$ be a $k$-regular ($k\neq0$) graph with $n$ vertices, and let $\tau$ be the minimum adjacency eigenvalue of $G$. Then
\begin{eqnarray*}
\alpha(G)\leq n\frac{|\tau|}{k-\tau},
\end{eqnarray*}
with equality if and only if there exists an independent set $C$ such that every vertex not in $C$ is adjacent to exactly $|\tau|$ vertices of $C$. Moreover, if an independent set $C$ satisfies the above condition, then
\begin{eqnarray*}
\alpha(G)=\Theta(G)=\vartheta(G)=|C|=n\frac{|\tau|}{k-\tau}.
\end{eqnarray*}
\end{cor}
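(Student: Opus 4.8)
The strategy is to instantiate Theorem \ref{thm3.1} with the natural positive semidefinite matrix built from the adjacency matrix of a $k$-regular graph, and then verify that the hypotheses of Theorem \ref{thm4.2} are met in the equality case. Specifically, for a $k$-regular graph with minimum adjacency eigenvalue $\tau<0$, I would set $\lambda=|\tau|=-\tau$ and consider the matrix $M=A+|\tau|I=A-\tau I$, where $A$ is the adjacency matrix. Since $\tau$ is the smallest eigenvalue of $A$, the matrix $M$ is positive semidefinite, its off-diagonal entries vanish on nonadjacent pairs, and every diagonal entry equals $|\tau|$. Taking the all-ones vector $x=e$, regularity gives $Ae=ke$, so $Me=(k-\tau)e$, which shows $e\in R(M)$ and hence $(M,e)\in\mathcal{M}(G)\subseteq\mathcal{P}(G)$.

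\textbf{Computing the bound.} With this choice, Lemma \ref{lem2.1} yields $M^\# e=(k-\tau)^{-1}e$, so $e^\top M^\# e=n(k-\tau)^{-1}$. Each diagonal entry $(M)_{uu}=|\tau|$ and each $x_u^2=1$, so $\max_{u}\frac{(M)_{uu}}{x_u^2}=|\tau|$. Therefore
\begin{eqnarray*}
F(M,e)=e^\top M^\# e\cdot\max_{u\in V(G)}\frac{(M)_{uu}}{x_u^2}=\frac{n|\tau|}{k-\tau},
\end{eqnarray*}
and part (2) of Theorem \ref{thm3.1} gives $\alpha(G)\leq n\frac{|\tau|}{k-\tau}$, establishing the bound.

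\textbf{The equality case.} The main substantive work is translating the abstract equality conditions of Theorem \ref{thm3.1}(2) into the stated combinatorial condition. Since every diagonal quotient $\frac{(M)_{vv}}{x_v^2}$ equals the constant $|\tau|=c$, the condition $\frac{(M)_{vv}}{x_v^2}=\max_u\frac{(M)_{uu}}{x_u^2}=c$ holds automatically for every vertex, and $M[C]$ is diagonal for any independent set $C$ because $(M)_{vu}=(A)_{vu}=0$ for distinct nonadjacent vertices. Thus the only remaining condition is the edge equation $cx_v=\sum_{u\in C}(M)_{vu}x_u^{-1}$ for $v\notin C$. With $x=e$ and $c=|\tau|$, the right-hand side is $\sum_{u\in C}(A)_{vu}$, which counts the number of neighbors of $v$ lying in $C$. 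Hence the equality condition reduces precisely to: every vertex $v\notin C$ has exactly $|\tau|$ neighbors in $C$. I expect this identification to be routine once the matrix entries are written out, but it is the step that carries the combinatorial content, sharpening Hoffman's original necessary condition into a necessary and sufficient one.

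\textbf{The chain of equalities.} Finally, when such a set $C$ exists, $(M,e)\in\mathcal{M}(G)$ together with $C$ satisfies all the conditions of part (2) of Theorem \ref{thm3.1}, so Theorem \ref{thm4.2} applies directly and delivers $\alpha(G)=\Theta(G)=\vartheta(G)=|C|=n\frac{|\tau|}{k-\tau}$. The only point requiring care throughout is the sign bookkeeping between $\tau<0$ and $|\tau|=-\tau$, ensuring $M=A-\tau I$ is genuinely positive semidefinite and that the denominator $k-\tau=k+|\tau|$ is positive.
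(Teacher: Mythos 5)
Your proposal is correct and follows essentially the same route as the paper: instantiate Theorem \ref{thm3.1} and Theorem \ref{thm4.2} with the pair $(A-\tau I,e)\in\mathcal{M}(G)$, use Lemma \ref{lem2.1} to compute $(A-\tau I)^{\#}e=(k-\tau)^{-1}e$ and hence $F(A-\tau I,e)=n|\tau|/(k-\tau)$, and translate the equality conditions into the neighbor-counting statement. Your write-up is in fact slightly more explicit than the paper's (which leaves the translation of the equality conditions to the reader), and your sign bookkeeping for $\tau<0$ is correct.
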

\begin{proof}
Since $G$ is $k$-regular, we have $(A-\tau I,e)\in\mathcal{M}(G)$, where $A$ is the adjacency matrix of $G$. By Lemma \ref{lem2.1}, we get
\begin{eqnarray*}
(A-\tau I)^\#e&=&(k-\tau)^{-1}e,\\
F(A-\tau I,e)&=&|\tau|e^\top(A-\tau I)^\#e=n\frac{|\tau|}{k-\tau}.
\end{eqnarray*}
The conclusions follow from Theorems \ref{thm3.1} and \ref{thm4.2}.
\end{proof}
Let $d_u$ denote the degree of a vertex $u$, and let $\overline{d}_S=|S|^{-1}\sum_{u\in S}d_u$ denote the average degree in a vertex set $S$. The largest Laplacian eigenvalue of a graph $G$ is the largest eigenvalue of the Laplacian matrix of $G$.

The following bound for independent sets was given in Theorem 4.3 of \cite{Godsil} without sufficient condition of the equality case.
\begin{cor}
\textup{\cite{Godsil}} Let $G$ be an $n$-vertex graph with the largest Laplacian eigenvalue $\mu>0$. For any independent set $S$ of $G$, we have
\begin{eqnarray*}
|S|\leq\frac{n(\mu-\overline{d}_S)}{\mu},
\end{eqnarray*}
with equality if and only if there exists a constant $d$ such that $d_u=d$ for each $u\in S$, and every vertex not in $S$ is adjacent to exactly $\mu-d$ vertices of $S$.
\end{cor}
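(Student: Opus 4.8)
The plan is to derive this Laplacian bound as a direct specialization of Theorem \ref{thm3.1}, part (1), by choosing the right positive semidefinite matrix $M$ and the right total-nonzero vector $x$. The natural candidate is $M=\mu I-L$, where $L=D-A$ is the Laplacian matrix of $G$ and $\mu$ is its largest eigenvalue. This $M$ is positive semidefinite by the definition of $\mu$, and its off-diagonal entry for a nonadjacent pair $\{i,j\}$ is $(\mu I-L)_{ij}=-L_{ij}=A_{ij}=0$, so the sign condition $(M)_{ij}x_ix_j\leq 0$ holds trivially for any $x$. Taking $x=e$, the all-ones vector, I would verify $e\in R(M)$: since $Le=0$ we have $Me=\mu e$, so $e$ lies in the eigenspace of $M$ for eigenvalue $\mu>0$, hence $e\in R(M)$. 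Therefore $(M,e)\in\mathcal{P}(G)$ in the sense of Section 3.

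With this choice the ingredients of $F_S(M,x)$ become explicit. The diagonal entry is $(M)_{uu}=\mu-(L)_{uu}=\mu-d_u$, and $x_u^2=1$, so $\sum_{u\in S}\frac{(M)_{uu}}{x_u^2}=\sum_{u\in S}(\mu-d_u)=|S|(\mu-\overline{d}_S)$ by the definition of $\overline{d}_S$. The remaining factor $x^\top M^\# x=e^\top(\mu I-L)^\# e$ I would compute using Lemma \ref{lem2.1}: since $e$ is an eigenvector of $M$ with nonzero eigenvalue $\mu$, we get $M^\# e=\mu^{-1}e$, whence $e^\top M^\# e=\mu^{-1}n$. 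Substituting into the bound $|S|\leq F_S(M,e)=\frac{e^\top M^\# e}{|S|}\sum_{u\in S}(\mu-d_u)$ gives $|S|\leq\frac{n}{\mu|S|}\cdot|S|(\mu-\overline{d}_S)=\frac{n(\mu-\overline{d}_S)}{\mu}$, which is exactly the claimed inequality.

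For the equality characterization I would appeal directly to the equality conditions in part (1) of Theorem \ref{thm3.1}. Equality there requires $M[S]$ to be diagonal, which for $M=\mu I-L$ means $(\mu I-L)_{uv}=L_{uv}=-A_{uv}=0$ for distinct $u,v\in S$; since $S$ is independent this holds automatically, so this condition imposes nothing extra. The condition $\frac{(M)_{vv}}{x_v^2}=c$ for $v\in S$ reads $\mu-d_v=c$, i.e. there is a constant $d$ with $d_v=d=\mu-c$ for all $v\in S$. The condition $cx_v=\sum_{u\in S}(M)_{vu}x_u^{-1}$ for $v\notin S$ becomes $c=\sum_{u\in S}A_{vu}$ (since $(M)_{vu}=A_{vu}$ for adjacent $v\in\!\!\!/\,S,u\in S$ and the $x$-entries are $1$), meaning every vertex $v\notin S$ has exactly $c=\mu-d$ neighbors in $S$. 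These are precisely the stated conditions $d_u=d$ on $S$ and every outside vertex adjacent to exactly $\mu-d$ vertices of $S$.

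The only step needing genuine care is translating the abstract pair of equality conditions from Theorem \ref{thm3.1} into the degree-theoretic statement, and in particular keeping track of the constant: one must match $c=\mu-d$ consistently across both conditions so that the value $\mu-d$ appearing in ``adjacent to exactly $\mu-d$ vertices'' agrees with the common $\varepsilon$-degree forced by the first condition. This bookkeeping is routine once the dictionary $(M)_{uu}=\mu-d_u$, $(M)_{vu}=A_{vu}$, $x\equiv e$ is fixed, and no substantive obstacle remains.
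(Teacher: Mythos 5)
Your proposal is correct and follows essentially the same route as the paper: take $M=\mu I-L$ and $x=e$, observe $(\mu I-L)e=\mu e$ so the pair lies in $\mathcal{M}(G)\subseteq\mathcal{P}(G)$, use Lemma \ref{lem2.1} to get $(\mu I-L)^\#e=\mu^{-1}e$, and read off both the bound and the equality conditions from part (1) of Theorem \ref{thm3.1}. Your explicit translation of the equality conditions into the degree-theoretic statement is a correct elaboration of what the paper leaves implicit.
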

\begin{proof}
Let $L$ be the Laplacian matrix of $G$. Since $(\mu I-L)e=\mu e$, we have $(\mu I-L,e)\in\mathcal{M}(G)$. By Lemma \ref{lem2.1}, we get
\begin{eqnarray*}
(\mu I-L)^\#e&=&\mu^{-1}e,\\
F_S(\mu I-L,e)&=&e^\top(\mu I-L)^\#e\sum_{u\in S}\frac{\mu-d_u}{|S|}=\frac{n}{\mu}(\mu-\overline{d}_S).
\end{eqnarray*}
The conclusion follows from part (1) of Theorem \ref{thm3.1}.
\end{proof}
The following bound was given independently in \cite{Lumei} and \cite{Godsil}, and the necessary condition of the equality case was given in Theorem 3.2 of \cite{Lumei}.
\begin{cor}\label{thm6.3}
\textup{\cite{Godsil,Lumei}} Let $G$ be an $n$-vertex graph with the largest Laplacian eigenvalue $\mu>0$ and the minimum degree $\delta$. Then
\begin{eqnarray*}
\alpha(G)\leq\frac{n(\mu-\delta)}{\mu},
\end{eqnarray*}
with equality if and only if there exists an independent set $C$ such that $d_u=\delta$ for each $u\in C$, and every vertex not in $C$ is adjacent to exactly $\mu-\delta$ vertices of $C$. Moreover, if an independent set $C$ satisfies the above conditions, then
\begin{eqnarray*}
\alpha(G)=\Theta(G)=\vartheta(G)=|C|=\frac{n(\mu-\delta)}{\mu}.
\end{eqnarray*}
\end{cor}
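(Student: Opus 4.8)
The plan is to apply part (2) of Theorem \ref{thm3.1} together with Theorem \ref{thm4.2} to the pair $(M,x)=(\mu I-L,e)$, where $L$ is the Laplacian matrix of $G$ and $e$ is the all-ones vector. This is the exact analogue of the construction in the preceding corollary, except that we now invoke the independence-number bound $F$ rather than the single-set bound $F_S$, so that the maximum over all vertices selects the minimum degree $\delta$.

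First I would check that $(\mu I-L,e)\in\mathcal{M}(G)$. Since $\mu$ is the largest Laplacian eigenvalue, $\mu I-L$ is positive semidefinite; and for $i\neq j$ we have $(\mu I-L)_{ij}=-(L)_{ij}=(A)_{ij}$, which vanishes whenever $i,j$ are nonadjacent, so condition (a) of $\mathcal{M}(G)$ holds. Because $(\mu I-L)e=\mu e$ with $\mu\neq0$, the vector $e$ is an eigenvector and therefore lies in $R(\mu I-L)$, giving condition (b). Lemma \ref{lem2.1} then yields $(\mu I-L)^\#e=\mu^{-1}e$.

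Next I would evaluate the bound. Using $e^\top(\mu I-L)^\#e=\mu^{-1}n$ and $(\mu I-L)_{uu}=\mu-d_u$, together with $\max_{u}(\mu-d_u)=\mu-\delta$, one gets
\begin{eqnarray*}
F(\mu I-L,e)=e^\top(\mu I-L)^\#e\max_{u\in V(G)}(\mu-d_u)=\frac{n(\mu-\delta)}{\mu},
\end{eqnarray*}
so part (2) of Theorem \ref{thm3.1} immediately gives $\alpha(G)\leq\frac{n(\mu-\delta)}{\mu}$.

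Finally I would translate the abstract equality conditions of Theorem \ref{thm3.1}(2) into the graph-theoretic ones stated here. For an independent set $C$ the submatrix $(\mu I-L)[C]$ is automatically diagonal, since $(L)_{vu}=-(A)_{vu}=0$ for distinct $u,v\in C$. The constant $c$ must equal $\max_u(\mu-d_u)=\mu-\delta$; the requirement $(\mu I-L)_{vv}=\mu-\delta$ for $v\in C$ then forces $d_v=\delta$ on $C$, while the equation $\mu-\delta=\sum_{u\in C}(\mu I-L)_{vu}=\sum_{u\in C}(A)_{vu}$ for $v\notin C$ says exactly that each vertex outside $C$ has $\mu-\delta$ neighbours in $C$. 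The additional equalities $\alpha(G)=\Theta(G)=\vartheta(G)=|C|$ then follow from Theorem \ref{thm4.2}, which applies because $(\mu I-L,e)\in\mathcal{M}(G)$. There is no genuine obstacle in this argument; the only point requiring attention is verifying that the constant $c$ is pinned down to $\mu-\delta$, since this single value simultaneously governs both the degree condition inside $C$ and the adjacency condition outside it.
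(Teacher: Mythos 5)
Your proposal is correct and follows essentially the same route as the paper: it applies Theorem \ref{thm3.1}(2) and Theorem \ref{thm4.2} to the pair $(\mu I-L,e)\in\mathcal{M}(G)$, uses Lemma \ref{lem2.1} to get $(\mu I-L)^\#e=\mu^{-1}e$, and evaluates $F(\mu I-L,e)=n(\mu-\delta)/\mu$. The only difference is that you spell out the translation of the abstract equality conditions into the degree and adjacency conditions, which the paper leaves implicit.
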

\begin{proof}
Let $L$ be the Laplacian matrix of $G$. Since $(\mu I-L)e=\mu e$, we have $(\mu I-L,e)\in\mathcal{M}(G)$. By Lemma \ref{lem2.1}, we get
\begin{eqnarray*}
(\mu I-L)^\#e&=&\mu^{-1}e,\\
F(\mu I-L,e)&=&(\mu-\delta)e^\top(\mu I-L)^\#e=\frac{n(\mu-\delta)}{\mu}.
\end{eqnarray*}
The conclusions follow from Theorems \ref{thm3.1} and \ref{thm4.2}.
\end{proof}
For a graph $G$ without isolated vertices, the \textit{normalized Laplacian matrix} $\mathcal{L}$ of $G$ is the $n\times n$ symmetric matrix with entries
\begin{eqnarray*}
(\mathcal{L})_{ij}=\begin{cases}1~~~~~~~~~~~~~~~~~~\mbox{if}~i=j,\\
-(d_id_j)^{-\frac{1}{2}}~~~~~~~\mbox{if}~\{i,j\}\in E(G),\\
0~~~~~~~~~~~~~~~~~~~\mbox{if}~\{i,j\}\notin E(G),\end{cases}
\end{eqnarray*}
The largest normalized Laplacian eigenvalue of $G$ is the largest eigenvalue of $\mathcal{L}$.

The following bound was given in \cite{Harant} without necessary and sufficient conditions of the equality case.
\begin{cor}\label{thm6.4}
\textup{\cite{Harant}} Let $G$ be a graph with $m$ edges, the largest normalized Laplacian eigenvalue $\mu$ and the minimum degree $\delta>0$. Then
\begin{eqnarray*}
\alpha(G)\leq\frac{2m(\mu-1)}{\mu\delta},
\end{eqnarray*}
with equality if and only if there exists an independent set $C$ such that $d_u=\delta$ for each $u\in C$, and every vertex $v$ not in $C$ is adjacent to exactly $(\mu-1)d_v$ vertices of $C$. Moreover, if an independent set $C$ satisfies the above conditions, then
\begin{eqnarray*}
\alpha(G)=\Theta(G)=\vartheta(G)=|C|=\frac{2m(\mu-1)}{\mu\delta}.
\end{eqnarray*}
\end{cor}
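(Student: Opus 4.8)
The plan is to follow the same template as the preceding corollaries: exhibit a pair $(M,x)\in\mathcal{M}(G)$ for which $F(M,x)$ equals the claimed bound, and then read off all conclusions from Theorems \ref{thm3.1} and \ref{thm4.2}. The natural choice is $M=\mu I-\mathcal{L}$, where $\mathcal{L}$ is the normalized Laplacian matrix of $G$, paired with the vector $x$ defined by $x_u=\sqrt{d_u}$ (that is, $x=D^{\frac{1}{2}}e$, where $D$ is the diagonal degree matrix). First I would verify $(M,x)\in\mathcal{M}(G)$. Condition (a) holds because the eigenvalues of $\mathcal{L}$ lie in $[0,\mu]$, so $M$ is positive semidefinite, and $(\mathcal{L})_{ij}=0$ for nonadjacent distinct $i,j$ forces $(M)_{ij}=0$ there. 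For condition (b), the vector $D^{\frac{1}{2}}e$ lies in the kernel of $\mathcal{L}$, so $Mx=\mu x$; since $\mu>0$ this gives $x=M(\mu^{-1}x)\in R(M)$, and $x$ is total nonzero because $\delta>0$.

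Next I would compute $F(M,x)$. Because $Mx=\mu x$ with $\mu\neq0$, Lemma \ref{lem2.1} yields $M^\#x=\mu^{-1}x$, hence $x^\top M^\#x=\mu^{-1}x^\top x=\mu^{-1}\sum_{u\in V(G)}d_u=\frac{2m}{\mu}$. The diagonal entries of $M$ are $(M)_{uu}=\mu-(\mathcal{L})_{uu}=\mu-1$, so $\frac{(M)_{uu}}{x_u^2}=\frac{\mu-1}{d_u}$, which (as $\mu\geq1$) is largest exactly at the vertices of minimum degree, giving $\max_{u}\frac{(M)_{uu}}{x_u^2}=\frac{\mu-1}{\delta}$. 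Multiplying, $F(M,x)=\frac{2m(\mu-1)}{\mu\delta}$, and part (2) of Theorem \ref{thm3.1} immediately yields $\alpha(G)\leq\frac{2m(\mu-1)}{\mu\delta}$.

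For the equality case I would specialize the conditions in part (2) of Theorem \ref{thm3.1} to this $(M,x)$. Since $C$ is independent, $M[C]$ is automatically diagonal. The condition $\frac{(M)_{vv}}{x_v^2}=\max_u\frac{(M)_{uu}}{x_u^2}=c$ for $v\in C$ becomes $\frac{\mu-1}{d_v}=\frac{\mu-1}{\delta}$, i.e. $d_v=\delta$ for each $v\in C$, with $c=\frac{\mu-1}{\delta}$. For $v\notin C$, using $(M)_{vu}=(d_vd_u)^{-\frac{1}{2}}$ when $u$ is adjacent to $v$ and $d_u=\delta$ for $u\in C$, the condition $cx_v=\sum_{u\in C}(M)_{vu}x_u^{-1}$ reduces, after multiplying through by $\delta\sqrt{d_v}$, to the statement that the number of vertices of $C$ adjacent to $v$ equals $(\mu-1)d_v$. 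This reproduces the stated equality conditions, and when they hold Theorem \ref{thm4.2} gives $\alpha(G)=\Theta(G)=\vartheta(G)=|C|=F(M,x)$. The only step needing real care is this last algebraic reduction of the second equality condition; everything else is a direct transcription of the general theorems together with the standard spectral facts about $\mathcal{L}$.
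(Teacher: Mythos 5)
Your proposal is correct and follows exactly the paper's route: it uses the same pair $(M,x)=(\mu I-\mathcal{L},\,D^{\frac{1}{2}}e)$, verifies $(M,x)\in\mathcal{M}(G)$ via $Mx=\mu x$, computes $F(M,x)=\frac{2m(\mu-1)}{\mu\delta}$ with Lemma \ref{lem2.1}, and cites Theorems \ref{thm3.1} and \ref{thm4.2}. You merely spell out the specialization of the equality conditions (including the sign check $(M)_{vu}=(d_vd_u)^{-\frac{1}{2}}$ for adjacent $v,u$) in more detail than the paper does, and that added detail is accurate.
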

\begin{proof}
Let $\mathcal{L}$ be the normalized Laplacian matrix of $G$, and let $x=(d_1^{\frac{1}{2}},\ldots,d_n^{\frac{1}{2}})^\top$. Since $(\mu I-\mathcal{L})x=\mu x$, we have $(\mu I-\mathcal{L},x)\in\mathcal{M}(G)$. By Lemma \ref{lem2.1}, we get
\begin{eqnarray*}
(\mu I-L)^\#x&=&\mu^{-1}x,\\
F(\mu I-\mathcal{L},x)&=&\frac{\mu-1}{\delta}x^\top(\mu I-\mathcal{L})^\#x=\frac{2m(\mu-1)}{\mu\delta}.
\end{eqnarray*}
The conclusions follow from Theorems \ref{thm3.1} and \ref{thm4.2}.
\end{proof}
The following bound was given in Theorem 6.1 of \cite{Godsil} without necessary and sufficient conditions of the equality case.
\begin{cor}
\textup{\cite{Godsil}} Let $G$ be an $n$-vertex graph, and $M$ is a positive semidefinite matrix indexed by vertices of $G$ such that\\
(a) $(M)_{ij}\leq0$ if $i,j$ are two nonadjacent vertices.\\
(b) $M$ has constant row sum $r>0$.\\
(c) Every diagonal entry of $M$ is $t$.\\
Then
\begin{eqnarray*}
\alpha(G)\leq\frac{nt}{r},
\end{eqnarray*}
with equality if and only if $G$ has an independent set $C$ such that $M[C]$ is diagonal and $\sum_{u\in C}(M)_{vu}=t~(v\notin C)$. Moreover, if an independent set $C$ satisfies the above conditions, then
\begin{eqnarray*}
\alpha(G)=\vartheta'(G)=|C|=\frac{nt}{r}.
\end{eqnarray*}
\end{cor}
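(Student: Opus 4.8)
The plan is to invoke part (2) of Theorem \ref{thm3.1} and Theorem \ref{thm5.3} with the natural choice $x=e$, the all-ones vector, so the whole statement reduces to checking that $(M,e)\in\mathcal{P}(G)$ and evaluating $F(M,e)$. First I would verify membership in $\mathcal{P}(G)$. Positive semidefiniteness of $M$ is hypothesis, and $e$ is trivially total nonzero. The constant row sum condition (b) says $Me=re$, so $e=M(r^{-1}e)\in R(M)$; moreover, for two nonadjacent distinct vertices $i,j$ we have $(M)_{ij}e_ie_j=(M)_{ij}\leq0$ by hypothesis (a). Hence $(M,e)\in\mathcal{P}(G)$.

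Next I would compute the functional $F(M,e)$ explicitly. Since $Me=re$ with $r\neq0$, Lemma \ref{lem2.1} yields $M^\#e=r^{-1}e$, so $e^\top M^\#e=r^{-1}e^\top e=n/r$. Because every diagonal entry of $M$ equals $t$ and $e_u=1$ for all $u$, we have $\max_{u\in V(G)}\frac{(M)_{uu}}{e_u^2}=t$. Therefore $F(M,e)=\frac{nt}{r}$, and part (2) of Theorem \ref{thm3.1} gives $\alpha(G)\leq F(M,e)=\frac{nt}{r}$ immediately.

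For the equality characterization I would specialize the equality conditions of part (2) of Theorem \ref{thm3.1} to the present setting. With $x=e$ the constant $c$ in that theorem equals $\max_{u}\frac{(M)_{uu}}{e_u^2}=t$. The condition $\frac{(M)_{vv}}{e_v^2}=t$ for $v\in C$ holds automatically (all diagonal entries are $t$), so it imposes no restriction on $C$; the remaining conditions are exactly that $M[C]$ is diagonal and $c e_v=\sum_{u\in C}(M)_{vu}e_u^{-1}$, i.e.\ $\sum_{u\in C}(M)_{vu}=t$ for $v\notin C$. This matches the stated equality conditions. Finally, whenever an independent set $C$ satisfies these, the pair $(M,e)\in\mathcal{P}(G)$ and $C$ satisfy the hypotheses of Theorem \ref{thm5.3}, which then delivers the ``moreover'' conclusion $\alpha(G)=\vartheta'(G)=|C|=F(M,e)=\frac{nt}{r}$.

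This argument is routine once the reductions are set up; the only step requiring a little care is confirming that the equality conditions of Theorem \ref{thm3.1} collapse exactly to the two conditions in the statement, in particular that the diagonal-value condition becomes vacuous precisely because hypothesis (c) forces every diagonal entry to be $t$. No genuine obstacle is expected beyond this bookkeeping, since both the bound and the sharpening to $\vartheta'(G)$ are inherited directly from the general theorems.
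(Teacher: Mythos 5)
Your proposal is correct and follows essentially the same route as the paper: take $x=e$, use $Me=re$ to conclude $(M,e)\in\mathcal{P}(G)$ and, via Lemma \ref{lem2.1}, $M^\#e=r^{-1}e$, hence $F(M,e)=te^\top M^\#e=nt/r$, then invoke Theorems \ref{thm3.1} and \ref{thm5.3}. The paper's proof is just a terser version of yours; your extra bookkeeping on how the equality conditions of Theorem \ref{thm3.1}(2) collapse under hypothesis (c) is accurate and fills in what the paper leaves implicit.
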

\begin{proof}
Since $Me=re$, we have $(M,e)\in\mathcal{P}(G)$. By Lemma \ref{lem2.1}, we get
\begin{eqnarray*}
M^\#e&=&r^{-1}e,\\
F(M,e)&=&te^\top M^\#e=\frac{nt}{r}.
\end{eqnarray*}
The conclusions follow from Theorems \ref{thm3.1} and \ref{thm5.3}.
\end{proof}

\section{Structural and spectral conditions}
Let $\delta(G)$ denote the minimum degree of a graph $G$. In the following theorem, we give some simple structural and spectral conditions for determining the independence number $\alpha(G)$, the Shannon capacity $\Theta(G)$ and the Lov\'{a}sz theta function $\vartheta(G)$. These conditions mean that if a given independent set $C$ satisfies some properties, then $C$ must be a maximum independent set in $G$, and $\Theta(G)=\vartheta(G)=|C|$.
\begin{thm}\label{thm7.1}
Let $G$ be a graph with an independent set $C$. Then
\begin{eqnarray*}
\alpha(G)=\Theta(G)=\vartheta(G)=|C|
\end{eqnarray*}
if $C$ satisfies one of the following conditions:\\
(1) Every vertex not in $C$ is adjacent to exactly $\lambda>-\tau$ vertices of $C$, where $\tau$ is the minimum adjacency eigenvalue of $G$.\\
(2) Every vertex not in $C$ is adjacent to exactly $-\tau$ vertices of $C$ and $G$ is regular, where $\tau<0$ is the minimum adjacency eigenvalue of $G$.\\
(3) $d_u=\delta(G)$ for each $u\in C$, and every vertex not in $C$ is adjacent to exactly $\mu-\delta(G)$ vertices of $C$, where $\mu>0$ is the largest Laplacian eigenvalue of $G$.\\
(4) $d_u=\delta(G)>0$ for each $u\in C$, and every vertex $v$ not in $C$ is adjacent to exactly $(\mu-1)d_v$ vertices of $C$, where $\mu$ is the largest normalized Laplacian eigenvalue of $G$.\\
(5) Every vertex not in $C$ is adjacent to all vertices of $C$ and $2|C|\geq|V(G)|-\delta(G-C)$.\\
(6) $G$ has a spanning subgraph $H$ ($\delta(H)>0$) which is semiregular bipartite with parameters $(n_1,n_2,r_1,r_2)$ ($n_1\leq n_2$), and $C$ is the independent set with $n_2$ vertices in $H$.\\
(7) There exists a clique covering $\varepsilon$ such that $d_v^\varepsilon=\min_{u\in V(G)}d_u^\varepsilon>0$ for each $v\in C$, and $|C\cap Q|=1$ for each $Q\in\varepsilon$.\\
(8) There exists a uniform hypergraph $H$ such that $G=\Omega(H)$ and $C$ corresponds to a perfect matching in $H$.\\
\end{thm}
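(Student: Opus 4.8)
The plan is to treat Theorem \ref{thm7.1} as a unified restatement of the equality cases already established in Sections 4--6, proving it by a case analysis in which each of the eight structural conditions is shown to place $C$ in the hypothesis of one previously proved result guaranteeing $\alpha(G)=\Theta(G)=\vartheta(G)=|C|$. In every case the work reduces to identifying the correct matrix-vector pair (or the correct ambient construction) and checking that the stated combinatorial condition coincides with the equality condition recorded earlier.

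First I would dispatch the conditions that are literal equality cases of results in Section 6. Condition (2) is precisely the equality case of Corollary \ref{thm6.1}, condition (3) that of Corollary \ref{thm6.3}, and condition (4) that of Corollary \ref{thm6.4}; in each the displayed hypothesis on $C$ is verbatim the ``if and only if'' condition proved there, so the conclusion follows at once. Conditions (7) and (8) are the equality cases of Theorems \ref{thm4.7} and \ref{thm4.9}; for (8) a perfect matching in the $k$-uniform hypergraph $H$ yields $m(H)=\alpha(\Omega(H))=|C|$ with the generalized-inverse bound attained.

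For condition (1) I would apply Corollary \ref{thm4.5} with $M=A+\lambda I$: since the least adjacency eigenvalue is $\tau$, every eigenvalue of $A+\lambda I$ is at least $\tau+\lambda>0$, so $A+\lambda I$ is positive definite, and ``every vertex not in $C$ is adjacent to exactly $\lambda$ vertices of $C$'' is exactly the equality condition of that corollary. Conditions (5) and (6) require a short structural translation. For (5), the assumption that every vertex outside $C$ is adjacent to all of $C$ means $G=(G-C)\vee\overline{K_{|C|}}$; applying Example 4.4 to the base graph $G-C$ with $s=|C|$, the requirement $s\geq|V(G-C)|-\delta(G-C)$ becomes $2|C|\geq|V(G)|-\delta(G-C)$, the stated inequality. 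For (6), $\delta(H)>0$ forces $n_1r_1=n_2r_2>0$ by counting the edges of the semiregular bipartite $H$ from each side, placing us in the hypotheses of Example 4.3, whose conclusion is $\alpha(G)=\Theta(G)=\vartheta(G)=n_2=|C|$.

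The main obstacle is not any single computation but the bookkeeping in cases (5) and (6): one must correctly reindex the ambient construction---shifting the role of the base graph to $G-C$ in (5), and reading off $n_1r_1=n_2r_2$ from $\delta(H)>0$ in (6)---so that the quantitative hypotheses of Examples 4.4 and 4.3 are met exactly. Once these translations are in place, each of the eight cases collapses to invoking an already-established equality statement.
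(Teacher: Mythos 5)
Your proposal is correct and follows exactly the same route as the paper, which proves Theorem \ref{thm7.1} by citing Corollaries \ref{thm4.5}, \ref{thm6.1}, \ref{thm6.3}, \ref{thm6.4}, Examples 4.4 and 4.3, and Theorems \ref{thm4.7} and \ref{thm4.9} for parts (1)--(8) respectively. Your added verifications (positive definiteness of $A+\lambda I$ in (1), the reindexing $G=(G-C)\vee\overline{K_{|C|}}$ in (5), and $n_1r_1=n_2r_2>0$ in (6)) are sound and merely make explicit what the paper leaves to the reader.
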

\begin{proof}
Parts (1) and (2) follow from Corollaries \ref{thm4.5} and \ref{thm6.1}, respectively. Parts (3) and (4) follow from Corollaries \ref{thm6.3} and \ref{thm6.4}, respectively. Parts (5) and (6) follow from Examples 4.4 and 4.3, respectively. Parts (7) and (8) follow from Theorems \ref{thm4.7} and \ref{thm4.9}, respectively.
\end{proof}
A family $\mathcal{F}$ of $k$-subsets of $\{1,\ldots,n\}$ is called \textit{intersecting} if $F_1\cap F_2\neq\emptyset$ for any $F_1,F_2\in\mathcal{F}$. For $n\geq2k$, Erd\"{o}s, Ko and Rado \cite{Erdos} proved that the maximum size of intersecting families of $\{1,\ldots,n\}$ is $\binom{n-1}{k-1}$.

The \textit{Kneser graph} $K(n,k)$ is a graph whose vertices consist of all $k$-subsets of $\{1,\ldots,n\}$, and two vertices $U_1,U_2\subseteq\{1,\ldots,n\}$ are adjacent if and only if $U_1\cap U_2=\emptyset$. The Erd\"{o}s-Ko-Rado Theorem is equivalent to say that $\alpha(K(n,k))=\binom{n-1}{k-1}$ for $n\geq2k$. It is known \textup{[15, Theorem 9.4.3]} that the minimum adjacency eigenvalue of $K(n,k)$ is $-\binom{n-k-1}{k-1}$. By computation, the Hoffman bound for $K(n,k)$ is $\binom{n-1}{k-1}$. So we have $\alpha(K(n,k))=\binom{n-1}{k-1}$ because $K(n,k)$ has an  independent set of size $\binom{n-1}{k-1}$.

We can also obtain the EKR Theorem without using the Hoffman bound. All $k$-subsets containing a fixed vertex $1$ forms an independent set $C$ of size $\binom{n-1}{k-1}$, and every vertex not in $C$ is adjacent to exactly $\binom{n-k-1}{k-1}$ vertices of $C$. From part (2) of Theorem \ref{thm7.1}, we have
\begin{eqnarray*}
\alpha(K(n,k))=\Theta(K(n,k))=\vartheta(K(n,k))=|C|=\binom{n-1}{k-1}.
\end{eqnarray*}

\section{Concluding remarks}
Let $M^{\otimes k}$ denote the Kronecker product of $k$ copies of matrix $M$. If there is some $k$ such that $\alpha(G^k)=\vartheta(G)^k$, then $\Theta(G)=\vartheta(G)$. We can derive the following characterization of graphs satisfying $\alpha(G^k)=\vartheta(G)^k$.
\begin{thm}\label{thm8.1}
Let $(M,x)\in\mathcal{M}(G)$ be a matrix-vector pair for graph $G$ such that $\vartheta(G)=F(M,x)$. There is some $k$ such that $\alpha(G^k)=\vartheta(G)^k$ if and only if $G^k$ has an independent set $C$ such that $(M^{\otimes k},x^{\otimes k})$ and $C$ satisfy the conditions given in part (2) of Theorem \ref{thm3.1}. Moreover, if $G^k$ has an independent set $C$ satisfies the above conditions, then
\begin{eqnarray*}
\alpha(G^k)=\vartheta(G)^k=\Theta(G)^k=|C|.
\end{eqnarray*}
\end{thm}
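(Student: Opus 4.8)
The plan is to reduce the entire statement to a single application of part (2) of Theorem \ref{thm3.1} (equivalently Theorem \ref{thm4.2}) to the graph $G^k$ equipped with the pair $(M^{\otimes k}, x^{\otimes k})$. Everything then hinges on two preliminary facts: that this pair is an admissible member of $\mathcal{M}(G^k)$, and that its $F$-value is exactly $\vartheta(G)^k$. Once these are in place, the equivalence and the final chain of equalities are immediate specializations of results already proved.

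First I would verify that $(M^{\otimes k}, x^{\otimes k}) \in \mathcal{M}(G^k)$. Since $M$ is positive semidefinite, so is $M^{\otimes k}$, and it is indexed by $V(G)^k = V(G^k)$. For two distinct nonadjacent vertices $u_1\cdots u_k$ and $v_1\cdots v_k$ of $G^k$, the definition of $G^k$ forces a coordinate $i$ with $u_i \neq v_i$ and $\{u_i, v_i\} \notin E(G)$; for that coordinate $(M)_{u_i v_i} = 0$, whence $(M^{\otimes k})_{(u_1\cdots u_k),(v_1\cdots v_k)} = \prod_{i=1}^k (M)_{u_i v_i} = 0$, as required. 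Writing $x = My$, which is possible because $x \in R(M)$, gives $x^{\otimes k} = M^{\otimes k} y^{\otimes k} \in R(M^{\otimes k})$, and $x^{\otimes k}$ is total nonzero because $x$ is. Next I would show $F(M^{\otimes k}, x^{\otimes k}) = F(M,x)^k = \vartheta(G)^k$. The central identity is $(M^{\otimes k})^\# = (M^\#)^{\otimes k}$, which I would check directly from the three defining equations of the group inverse using $(A\otimes B)(C\otimes D) = (AC)\otimes(BD)$; both group inverses exist since $M$ and $M^{\otimes k}$ are positive semidefinite. This yields $(x^{\otimes k})^\top (M^{\otimes k})^\# x^{\otimes k} = (x^\top M^\# x)^k$. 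For the other factor, the diagonal and coordinate entries multiply across the Kronecker product, so $\frac{(M^{\otimes k})_{uu}}{(x^{\otimes k})_u^2} = \prod_{i=1}^k \frac{(M)_{u_i u_i}}{x_{u_i}^2}$ for $u = (u_1,\ldots,u_k)$; since each factor is nonnegative, the maximum over $u$ factors as $\left(\max_{v} \frac{(M)_{vv}}{x_v^2}\right)^k$. Multiplying the two pieces gives $F(M^{\otimes k}, x^{\otimes k}) = F(M,x)^k = \vartheta(G)^k$.

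With these facts, part (2) of Theorem \ref{thm3.1} applied to $G^k$ gives, for each $k$, $\alpha(G^k) \leq F(M^{\otimes k}, x^{\otimes k}) = \vartheta(G)^k$, with equality if and only if $G^k$ has an independent set $C$ for which $(M^{\otimes k}, x^{\otimes k})$ and $C$ satisfy the stated conditions; passing the existential quantifier over $k$ through this pointwise equivalence yields the claimed ``if and only if''. For the final chain of equalities, if such a $C$ exists then Theorem \ref{thm4.2} applied to $G^k$ gives $\alpha(G^k) = |C| = \vartheta(G)^k$; combining this with $\vartheta(G) \geq \Theta(G) \geq \alpha(G^k)^{1/k} = \vartheta(G)$, which comes from Lemma \ref{lem2.5} and the definition of $\Theta$ as a supremum, forces $\Theta(G) = \vartheta(G)$ and hence $\alpha(G^k) = \vartheta(G)^k = \Theta(G)^k = |C|$. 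The main work lies in the second step---the group-inverse identity $(M^{\otimes k})^\# = (M^\#)^{\otimes k}$ and the factorization of the maximum of the diagonal ratios---whereas the logical skeleton is a direct specialization of the earlier theorems to $G^k$.
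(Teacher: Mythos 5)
Your proposal is correct and follows essentially the same route as the paper: apply part (2) of Theorem \ref{thm3.1} and Theorem \ref{thm4.2} to $G^k$ with the pair $(M^{\otimes k},x^{\otimes k})$, then use $\alpha(G^k)^{1/k}\leq\Theta(G)\leq\vartheta(G)$ to conclude $\Theta(G)=\vartheta(G)$. The only difference is that you supply the verifications of $(M^{\otimes k},x^{\otimes k})\in\mathcal{M}(G^k)$ and $F(M^{\otimes k},x^{\otimes k})=F(M,x)^k$ (via $(M^{\otimes k})^\#=(M^\#)^{\otimes k}$ and the factorization of the maximum), which the paper asserts without proof.
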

\begin{proof}
Since $(M,x)\in\mathcal{M}(G)$, we have $(M^{\otimes k},x^{\otimes k})\in\mathcal{M}(G^k)$. By Theorem \ref{thm3.1}, we have
\begin{eqnarray*}
\alpha(G^k)\leq F(M^{\otimes k},x^{\otimes k})=F(M,x)^k=\vartheta(G)^k,
\end{eqnarray*}
with equality if and only if $G^k$ has an independent set $C$ such that $(M^{\otimes k},x^{\otimes k})$ and $C$ satisfy the conditions given in part (2) of Theorem \ref{thm3.1}. Moreover, if $G^k$ has an independent set $C$ satisfies the above conditions, then by Theorem \ref{thm4.2}, we have
\begin{eqnarray*}
\alpha(G^k)=\vartheta(G^k)=\Theta(G^k)=|C|=F(M,x)^k=\vartheta(G)^k.
\end{eqnarray*}
Since $\alpha(G^k)^{\frac{1}{k}}\leq\Theta(G)\leq\vartheta(G)$, we have $\Theta(G)=\vartheta(G)$.
\end{proof}
In \cite{Lovasz}, Lov\'{a}sz proved that $\Theta(C_5)=\sqrt{5}$. The cycle $C_5$ is also an example for Theorem \ref{thm8.1}.
\begin{example}
Let $A$ be the adjacency matrix of the cycle $C_5$, then $(A+2I,e)\in\mathcal{M}(C_5)$ and $((A+2I)^{\otimes 2},e^{\otimes 2})\in\mathcal{M}(C_5^2)$. Suppose that $\{0,1,2,3,4\}$ is the vertex set of $C_5$ and $E(C_5)=\{\{0,1\},\{1,2\},\{2,3\},\{3,4\},\{4,0\}\}$. Then $S=\{00,12,24,31,43\}$ is an independent set of $C_5^2$. By computation, we know that $((A+2I)^{\otimes 2},e^{\otimes 2})$ and $S$ satisfy the conditions given in part (2) of Theorem \ref{thm3.1}. By Theorem \ref{thm8.1}, we have $\alpha(C_5^2)=\vartheta(C_5)^2=\Theta(C_5)^2=5$.
\end{example}

\end{CJK*}
\end{spacing}
\end{document}